\documentclass[10pt,fleqn]{article}

\usepackage{mathptmx}
\usepackage[T1]{fontenc}

\usepackage{latexsym,epsfig,verbatim}
\usepackage{amsmath,amsthm,amssymb,textcomp}
\usepackage{bm}  

\usepackage{url}
\usepackage{color}
\usepackage[colorlinks,citecolor=blue]{hyperref}

\usepackage[symbol*,stable]{footmisc}
\DefineFNsymbols{richard}[math]{
*				{**}
\circ 				{\circ\circ} 
\bullet 			{\bullet\bullet} 	
\vartriangle		{\vartriangle\vartriangle}
\blacktriangle		{\blacktriangle\blacktriangle}
\dagger			{\dagger\dagger}
\ddagger			{\ddagger\ddagger}					
}%
\setfnsymbol{richard}
\renewcommand\footnotemark{}

\usepackage{tikz}
\usetikzlibrary{matrix,arrows,decorations.pathmorphing}

\numberwithin{equation}{section} 

\theoremstyle{plain}

\newtheorem{theorem}{Theorem}
\newtheorem{lemma}[theorem]{Lemma}

\newtheorem{proposition}[theorem]{Proposition}

\newtheorem*{claim}{Claim}

\theoremstyle{definition}

\newcommand{\ball}{B}

\newcommand{\injrad}{\mathrm{injrad}}
\newcommand{\exitexit}{E}
\newcommand{\Manifold}{\mathfrak{M}}

\newcommand{\EP}[2]{\mathcal{E}_{#2}^{#1}}

\newcommand{\gmetric}{g}


\newcommand{\AH}{\mathrm{AH}}
\newcommand{\GF}[1]{\mathrm{AH}(#1)^\circ}
\newcommand{\QF}{\mathrm{QF}}
\newcommand{\qf}{\mathrm{qf}}
\newcommand{\Hom}{\mathrm{Hom}}

\newcommand{\univalent}{\varphi}
\newcommand{\univalentextension}{\Phi}


\newcommand{\PSL}{\mathrm{PSL}}
\newcommand{\domain}[1]{\mathcal{D}^{#1}}
\newcommand{\Mod}{\mathrm{Mod}}

\newcommand{\dee}{\mathrm{d}}

\newcommand{\scalednorm}[1]{\| #1 \|_{}}
\newcommand{\poincare}{\lambda}

\newcommand{\Aconstant}{A}
\newcommand{\AAconstant}{A_{0}}
\newcommand{\Bconstant}{A_1}
\newcommand{\bconstant}{\mathcal{B}_0}
\newcommand{\bbconstant}{\mathcal{B}_1}
\newcommand{\bbbconstant}{\mathcal{B}}
\newcommand{\Cconstant}{18\pi}
\newcommand{\dconstant}{d}
\newcommand{\Dconstant}{A_2}
\newcommand{\Econstant}{A_3}
\newcommand{\mconstant}{m}
\newcommand{\Tconstant}{T}

\newcommand{\constantA}{A_4}
\newcommand{\constantB}{A_5}
\newcommand{\constantC}{A_6}

\newcommand{\TianC}{C_\mathrm{\, Tian}} 
\newcommand{\TianEpsilon}{\epsilon_\mathrm{\, Tian}} 

\newcommand{\co}{\colon\thinspace}

\newcommand{\HH}{\mathbb{H}}
\newcommand{\RR}{\mathbb{R}}

\newcommand{\CC}{\mathbb{C}}

\newcommand{\calQ}{\mathcal{Q}}
\newcommand{\Teich}{\mathcal{T}}
\newcommand{\UU}{\mathcal{U}}

\newcommand{\dd}{\mathrm{d}}

\newcommand{\BigO}{\mathcal{O}}
\newcommand{\floor}[1]{\lfloor #1 \rfloor}


\newcommand{\ee}{\mathbf{e}}

\newcommand{\uu}{\mathbf{u}}

\newcommand{\vv}{\mathbf{v}}
\newcommand{\ww}{\mathbf{w}}
\newcommand{\mm}{\mathbf{m}}
\newcommand{\nn}{\mathbf{n}}

\newcommand{\ii}{\mathbf{i}}
\newcommand{\jj}{\mathbf{j}}


\newcommand{\ddxk}{\frac{\partial}{\partial x^k}}
\newcommand{\ddxl}{\frac{\partial}{\partial x^\ell}}

\newcommand{\ddt}{\frac{\partial}{\partial t}}
\newcommand{\ddx}{\frac{\partial}{\partial x}}
\newcommand{\ddy}{\frac{\partial}{\partial y}}

\newcommand{\dduone}{\frac{\partial}{\partial u^1}}
\newcommand{\ddutwo}{\frac{\partial}{\partial u^2}}
\newcommand{\dduthree}{\frac{\partial}{\partial u^3}}

\newcommand{\grad}{\nabla}

\newcommand{\Der}{\mathrm{D}}

\newcommand{\Schwarz}{\mathcal{S}\!}

\newcommand{\normalcurv}{\mathcal{N}}


\newcommand{\Ric}{R}
\newcommand{\Ricci}{\mathrm{Ric}}

\newcommand{\infinity}{{\rotatebox{90}{\hspace{-.75pt}\large $8$}}}
\newcommand{\smallinfinity}{{\hspace{.5pt} \rotatebox{90}{\footnotesize $8$}}}
\newcommand{\scriptinfinity}{{\rotatebox{90}{\scriptsize $8$}}}

\hyphenation{qua-si-iso-met-ric-al-ly qua-si-con-form-al iso-met-ric-al-ly con-form-al fuchs-i-an qua-si-fuchs-i-an ge-o-des-ic Teich-m\"ul-ler Klein-i-an Schwarz-i-an}

\begin{document}

\title{
\textbf{Thick--skinned $3$--manifolds}
}
\author{Richard P. Kent IV and Yair N. Minsky\thanks{This research was partially supported by NSF grants DMS--1104871 and DMS--1005973.
The authors acknowledge support from U.S. National Science Foundation grants DMS--1107452, 1107263, 1107367 ``RNMS: GEometric structures And Representation varieties'' (the GEAR Network).}
}

\date{June 30, 2014}

\maketitle

\begin{abstract}
We show that if the totally geodesic boundary of a compact hyperbolic $3$--manifold $M$ has a collar of depth $d \gg 0$, then the diameter of the skinning map of $M$ is no more than $Ae^{-d}$ for some $A$ depending only on the genus and injectivity radius of $\partial M$.
\end{abstract}

\bigskip


\noindent
Given a discrete group $G$, we equip $\Hom(G, \PSL_2(\CC))$ with the compact--open topology.  
This induces a topology on the space $\Hom(G, \PSL_2(\CC))/\PSL_2(\CC)$ of conjugacy classes of representations called the \textit{algebraic topology}.
If $N$ is a connected $3$--manifold, we let 
\[
\AH(N) \subset
\Hom(\pi_1(N), \PSL_2(\CC))/\PSL_2(\CC)
\]
be the subset of conjugacy classes of discrete and faithful representations with the subspace topology.
Each such conjugacy class corresponds to a hyperbolic structure on a $3$--manifold homotopy equivalent to $N$.
Let $\GF{N}$ be the interior of $\AH(N)$.

Let $S$ be a closed connected oriented surface of negative Euler characteristic.\footnote{We assume that $S$ is connected for convenience.  Defining the Teichm\"uller space of a disconnected surface to be the product of the Teichm\"uller spaces of the components, and adapting the proofs accordingly, we could dispense with this assumption.}
By work of Marden \cite{marden} and Sullivan \cite{sullivanstability}, the space $\GF{S \times \RR}$ equals the set $\QF(S)$ of convex cocompact, or \textit{quasifuchsian}, hyperbolic structures on $S \times \RR$.
By the Simultaneous Uniformization Theorem \cite{Bers.1960}, the space $\QF(S)$  is naturally homeomorphic to the product of Teichm\"uller spaces $\Teich(S) \times \Teich(\overline{S})$.
If $(X,\overline{Y})$ is a point of $\Teich(S) \times \Teich(\overline{S})$, we let $\qf(X, \overline{Y})$ denote $S \times \RR$ with the corresponding convex cocompact hyperbolic structure.

Let $M$ be a compact hyperbolic $3$--manifold with totally geodesic boundary homeomorphic to $S$.
A generalization of the Simultaneous Uniformization Theorem due to Ahlfors, Bers, Marden, and Sullivan (see \cite{Bers.1970} or \cite{Canary.McCullough.2004}) tells us that the space $\GF{M}$ of convex cocompact hyperbolic metrics on $M^\circ$ is naturally homeomorphic to the Teichm\"uller space $\Teich(S)$.
If $X$ is a point in $\Teich(S)$, we let $M^X$ denote $M^\circ$ equipped with the corresponding convex cocompact hyperbolic structure.

The inclusion  $\partial M \to M$ induces a map $\GF{M} \to \QF(S)$.
Identifying $\GF{M}$ with $\Teich(S)$ and $\QF(S)$ with $\Teich(S) \times \Teich(\overline{S})$, this map is given by $X \mapsto (X, \sigma_M(X))$ for some function
\[
	\sigma_M \co \Teich(S) \to \Teich(\overline{S}).
\]
The function $\sigma_M$ is Thurston's \textit{skinning map} associated to $M$.
This map is a key ingredient in Thurston's proof of Geometrization for Haken Manifolds \cite{Morgan.1979,Otal.1998,Otal.2001,Kapovich.2001}.
Thurston's Bounded Image Theorem \cite{Thurston.1979.Bangor,Kent.2010} states that the image of $\sigma_M$ is bounded, and we call the diameter of the image  with respect to the Teichm\"uller metric the \textit{diameter of $\sigma_M$}. 
In \cite{Kent.2010}, the first author proved that if $\partial M$ has a large collar, then $\sigma_M$ carries a large ball to a set of very small diameter (Theorem 29 there).
We greatly improve that theorem here.

We say that the totally geodesic boundary $\partial M$ in a hyperbolic $3$--manifold 
$M$ has a \textit{collar of depth $d$} if the $d$--neighborhood of $\partial M$ is homeomorphic to $\partial M \times [0,d]$.
\begin{theorem}\label{DiameterBound.theorem}
If $\epsilon$ and $\mconstant$ are positive numbers, then there are positive numbers $\Aconstant$ and $\Tconstant$ such that the following holds. 
If $M$ is a compact hyperbolic $3$--manifold with totally geodesic boundary $Y$ with $\chi(Y) \geq -\mconstant$ and $\injrad(Y) \geq \epsilon$, and $M$ contains a collar of depth $\dconstant \geq \Tconstant$ about $\Sigma$, then the skinning map $\sigma_M$ has diameter less than $\Aconstant e^{- \dconstant}$.
\end{theorem}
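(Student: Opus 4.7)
\bigskip

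\noindent\textbf{Proof plan.}
The approach is to analyze the quasifuchsian cover $N^X$ of $M^X$ corresponding to the inclusion $\pi_1(Y) \hookrightarrow \pi_1(M)$; by the construction of $\sigma_M$ one has $N^X \cong \qf(X, \sigma_M(X))$. Let $X_0 \in \Teich(S)$ denote the intrinsic hyperbolic structure on $Y = \partial M$. Then $M^{X_0}$ is the convex cocompact structure on $M^\circ$ whose convex core is $M$ itself, the cover $N^{X_0}$ is Fuchsian with $\sigma_M(X_0) = \overline{X_0}$, and the collar of depth $d$ about $Y$ in $M$ lifts to an embedded isometric product $\hat Y \times [0,d] \subset N^{X_0}$ with $\hat Y$ the totally geodesic lift of $Y$. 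The plan is to prove the sharper statement
\[
d_\Teich\!\bigl(\sigma_M(X),\, \overline{X_0}\bigr) \;\leq\; \tfrac{1}{2}\Aconstant\, e^{-d} \qquad \text{for every } X \in \Teich(S),
\]
from which the diameter bound follows by the triangle inequality.

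The heart of the argument will be the construction, inside each $N^X$, of a surface $\Sigma_X$ homotopic to $\hat Y$ with two features: (i) its principal curvatures have magnitude $\BigO(e^{-d})$, and (ii) its intrinsic conformal class lies within Teichm\"uller distance $\BigO(e^{-d})$ of $X_0$. I plan to seek $\Sigma_X$ either as a minimal-surface representative of the homotopy class of $\hat Y$ (available in quasifuchsian manifolds by Uhlenbeck or Freedman--Hass--Scott) or as an Epstein/equidistant surface at depth $\approx d/2$ from the $X$--end. The exponential decay should emerge from a propagation estimate across the collar: on the $\sigma_M(X)$ side, the geometry of $N^X$ is uniformly controlled by the Bounded Image Theorem, and the strong negative curvature of $\HH^3$ will damp any $X$-dependent perturbation exponentially as one propagates across the depth-$d$ tube. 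The hypotheses $\chi(Y) \geq -\mconstant$ and $\injrad(Y) \geq \epsilon$ enter here to supply uniform constants in the bounded-geometry comparisons.

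Granted such a $\Sigma_X$, the final step will invoke a nearly-Fuchsian comparison: an Uhlenbeck-type almost-Fuchsian principle, quantified via Schwarzian derivative estimates on the projective boundaries, ensures that when a quasifuchsian manifold contains an embedded surface $\Sigma$ of intrinsic conformal class $Z$ and principal curvatures bounded by $\eta$, the two conformal structures at infinity lie within Teichm\"uller distance $\BigO(\eta)$ of $Z$ and $\overline{Z}$, respectively. Applied to $N^X$ with $\Sigma = \Sigma_X$, this gives $\sigma_M(X)$ within $\BigO(e^{-d})$ of $\overline{X_0}$, uniformly in $X$.

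The hard part will be the collar propagation step: in $M$ the collar is metrically present, but in $M^X$ for $X \neq X_0$ it is only topologically present, so controlling $\Sigma_X$ will require combining the global Bounded Image information with local exponential-decay estimates (Jacobi-field or heat-kernel bounds in a tubular neighborhood of $\hat Y$), with careful bookkeeping to ensure that all constants depend only on $\chi(Y)$ and $\injrad(Y)$.
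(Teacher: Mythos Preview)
Your proposal correctly identifies the target --- showing $\sigma_M(X)$ lies within $O(e^{-d})$ of $\overline{Y}$ for every $X$ --- but the step you yourself flag as ``the hard part'' is a genuine gap, and it is essentially the entire content of the theorem. The depth-$d$ collar lives in $M^{X_0}$; for arbitrary $X$ there is no \emph{a priori} collar in $M^X$ or in $N^X = \qf(X,\sigma_M(X))$, so there is nothing to propagate across. The Bounded Image Theorem does not rescue this: it bounds $\sigma_M(X)$ in a compact set depending on $M$, but gives no uniform control on the internal geometry of $N^X$ as $X$ ranges over all of $\Teich(S)$, since the convex core of $N^X$ can be arbitrarily large. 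Neither of your candidates for $\Sigma_X$ works without already knowing the answer: a minimal surface in $N^X$ has small principal curvatures only if $N^X$ is already almost-Fuchsian, which is the conclusion; and an Epstein or equidistant surface at depth $t$ in an end has principal curvatures near $1$, not $0$, with intrinsic conformal class near the corresponding conformal boundary ($X$ or $\sigma_M(X)$), not near $X_0$.

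The paper avoids this circularity by never analyzing the hyperbolic metric on $M^X$ directly. Instead it \emph{constructs by hand} a Riemannian metric $\omega$ on $M^\circ$ by gluing the $X$-side of $\qf(X,\overline{Y})$ onto a large compact piece of $M^{X_0}$ along Epstein surfaces deep in the collar; explicit Schwarzian-derivative computations show $\omega$ has sectional curvatures within $O(e^{-2d})$ of $-1$ and traceless Ricci curvature of $L^2$-norm $O(e^{-d})$. Tian's theorem (after a Brooks-style passage to a closed manifold) then certifies that $\omega$ is $\mathcal{C}^2$-close to the genuine hyperbolic metric on $M^X$, and comparing normal curvatures of the Epstein surfaces in the two metrics yields the bound on $\sigma_M(X)$. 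That a deep collar persists in $M^X$ for every $X$ is then a \emph{consequence} of this analysis, not an input to it.
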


We pause to sketch the proof.

Consider the hyperbolic manifolds $\qf(X,\overline{Y})$ and $\qf(Y,\overline{Y})$.
Very far out toward their $\overline{Y}$--ends, these manifolds are very nearly isometric. 
In fact, the proximity of the metrics decays exponentially in the distance from the convex core.  
Using foliations constructed by C. Epstein, the metrics near the $\overline{Y}$--ends of $\qf(X,\overline{Y})$ and $\qf(Y,\overline{Y})$ may be written down explicitly in terms of the Schwarzian derivatives associated to the projective structures on $\overline{Y}$, see Section \ref{Epstein.surfaces.section}.
This allows us to explicitly glue the ``$X$--side'' of $\qf(X,\overline{Y})$ to the ``$\overline{Y}$--side'' of $\qf(Y,\overline{Y})$ to obtain a metric $\eta$ on $S \times \RR$ which is hyperbolic away from a shallow gluing region of the form $S \times [n,n+1]$, see Figure \ref{GlueQF.figure}.
Calculations (in Sections \ref{Curvature.section.1} and \ref{Curvature.section.3}) show that the resulting metric has  sectional and traceless Ricci curvatures exponentially close to $-1$ and $0$, respectively.
Moreover, the $L^2$--norm of the traceless Ricci curvature of this metric is exponentially small (see Section \ref{Curvature.section.2}).

\begin{figure}
\bigskip
\medskip
\quad \quad \quad \ \
\resizebox{.9\textwidth}{!}{\input{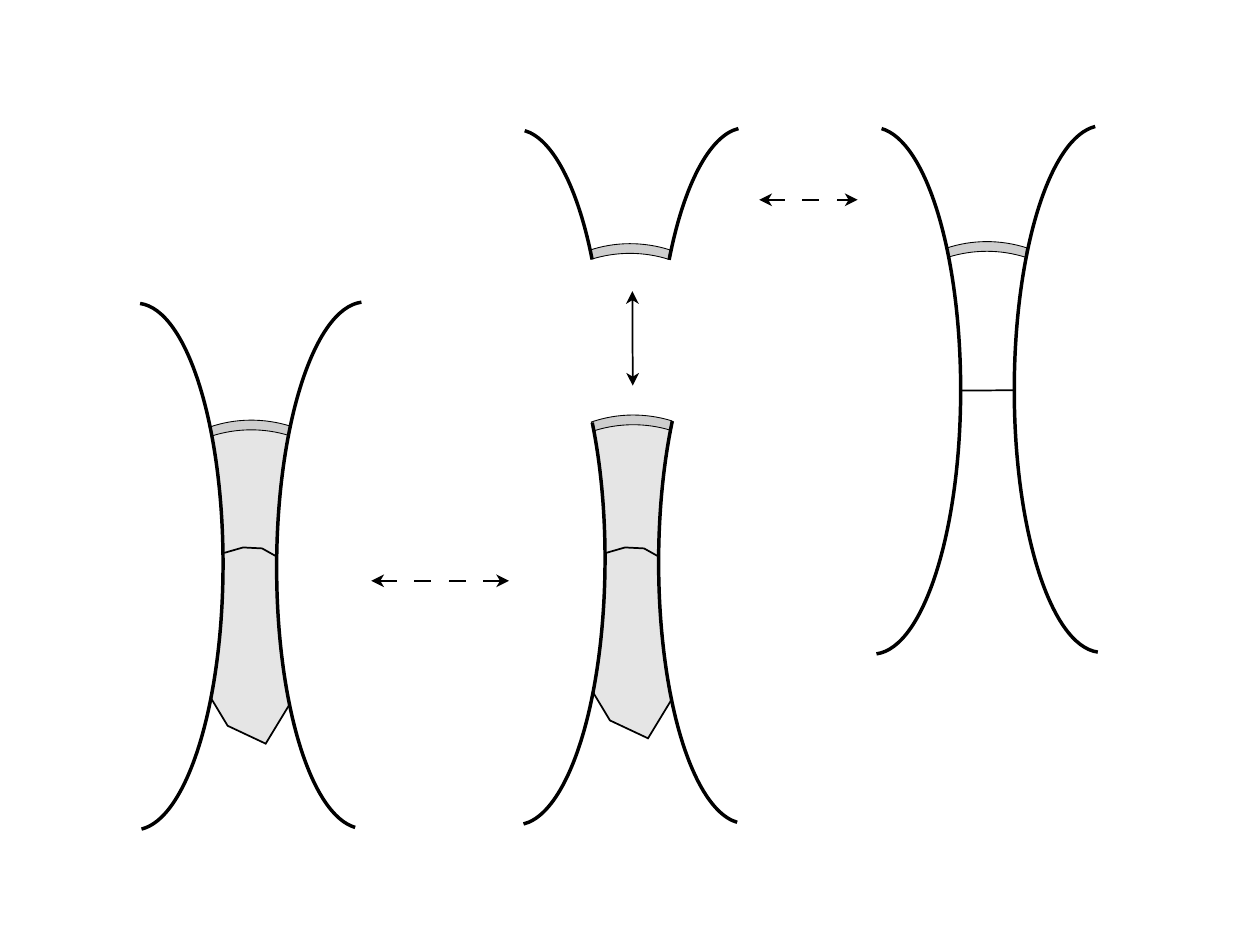tex_t}}
\caption{Building the metric $\eta$ on $S \times \RR$.}\label{GlueQF.figure}
\end{figure}

Given a hyperbolic manifold $M^Y$ with totally geodesic boundary $Y$ possessing a large collar about its boundary, we may glue the ``$X$--side'' of $\qf(X,\overline{Y})$ to a compact piece of $M^Y$ in the same way to obtain a metric $\omega$ on $M^\circ$ with the same curvature bounds, see Figure \ref{GlueQFMY.figure}.

It is a theorem of Tian that a Riemannian metric on a closed $3$--manifold whose sectional curvatures are very close to $-1$ and whose traceless Ricci curvature has very small $L^2$--norm is $\mathcal{C}^2$--close to a hyperbolic metric, where the proximity depends only on the curvatures and their norms and not the topology of the $3$--manifold.
As our manifold is noncompact, Tian's theorem is not directly applicable.
A theorem of Brooks \cite{Brooks.1986} allows us to circumvent this problem by performing a small quasiconformal deformation of $\qf(X, \overline{Y})$ to obtain a manifold covering a closed one, and we find that $\omega$ is $\mathcal{C}^2$--close to the convex cocompact manifold $M^X \cong M^\circ$ with conformal boundary $X$.
See Section \ref{Proof.section}.

\begin{figure}
\bigskip
\bigskip
\bigskip
\medskip
\quad \quad \quad
\resizebox{.85\textwidth}{!}{\input{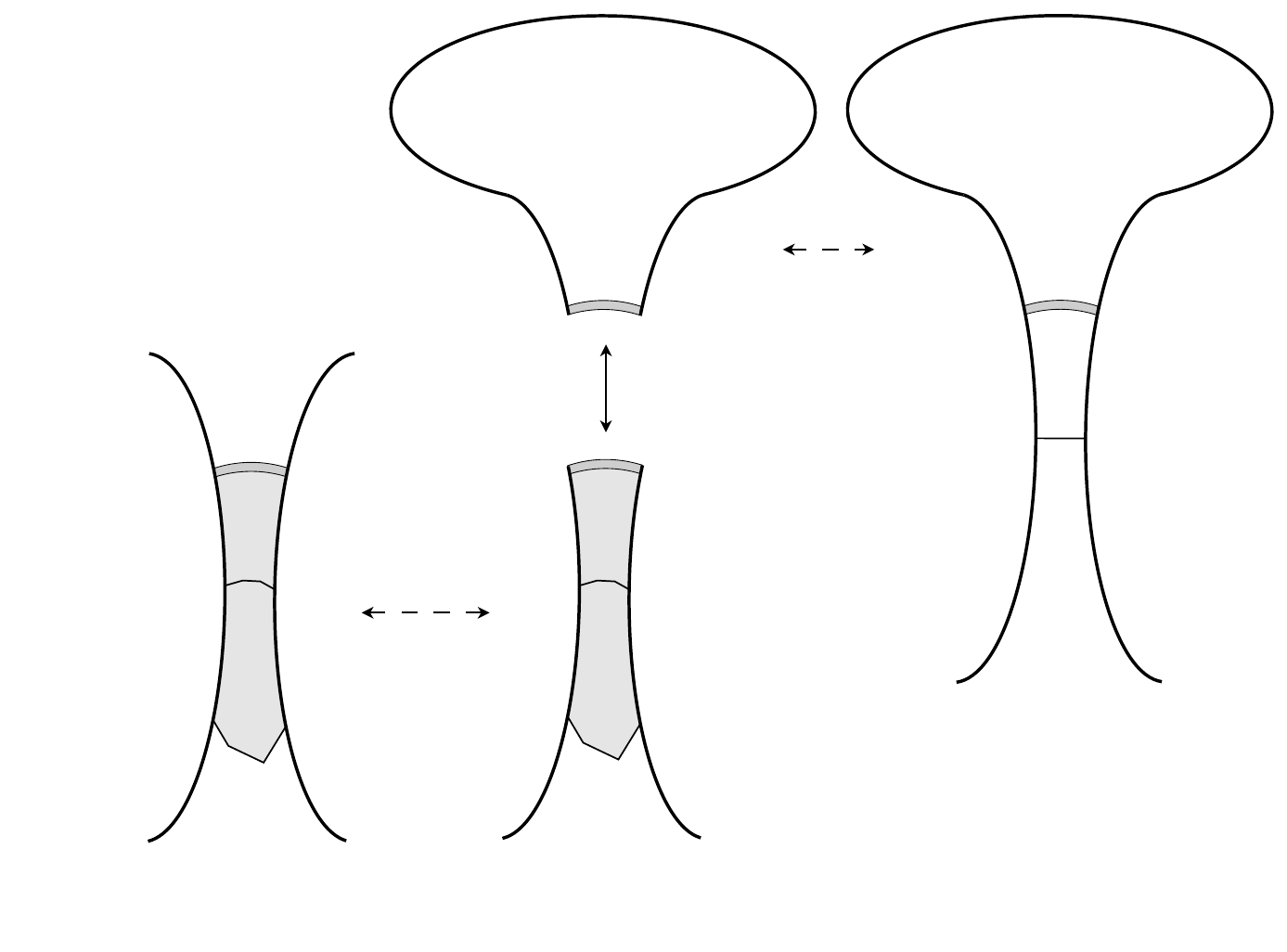tex_t}}
\caption{Building the metric $\eta$ on $M^\circ$.}\label{GlueQFMY.figure}
\end{figure}

Now, the copy of $S \times \{n+1\}$ in $(M^\circ, \omega)$ is conformally equivalent to $\overline{Y}$, and, for large $n$, the proximity of the metrics implies that the corresponding surface in $M^X$ is very close to both $\sigma_M(X)$ and $\overline{Y}$ in Teichm\"uller space.
We conclude that the image of $\sigma_M$ lies in a small neighborhood of $\overline{Y}$.

\medskip
\noindent \textbf{Generalizations.}
The hypothesis bounding the injectivity radius is needed in the proof in two places: to bound the Sobolev norms of the Schwarzian derivatives of developing maps of the quasifuchsian manifolds, which is needed in the estimation of the curvatures of our model metric; and to guarantee that our model metric is Einstein on the $1$--thin part, which is required by Tian's theorem.  
We suspect that one may dispense with this hypothesis, though our proof can not.

The dependence of the constants $\Aconstant$ and $\Tconstant$ on $\chi(\partial M)$ is likely necessary, though we do not have a sequence of examples to demonstrate this. 
In \cite{Kent.2010}, the first author shows that there are manifolds with arbitrarily large diameter skinning maps, but the maximum depth of the collars about their geodesic boundaries tends to zero.

Skinning maps are defined for any orientable hyperbolic manifold with incompressible boundary, and we in fact establish the following generalization of Theorem \ref{DiameterBound.theorem}.

If $Z$ is a Riemann surface, let $\injrad(Z)$ denote the injectivity radius of the hyperbolic metric in its conformal class.
\begin{theorem}\label{DiameterBound.Generalization.theorem}
Let $M = M^W$ be a convex cocompact hyperbolic $3$--manifold with freely indecomposable nonelementary fundamental group and conformal boundary $W$.
Let
$
p \co \qf(W, \overline{Y}) \to M
$
be the covering map corresponding to $W$.
There are constants $A$, $T$, and $K$ depending only on $\chi(W)$ and $\injrad(\overline{Y})$ such that the following holds.

If $d > T$ and $p$ \textbf{embeds} the $d$--neighborhood of the convex core of $\qf(W,\overline{Y})$  isometrically into $M$, then $\sigma_M$ has diameter no more than $A
e^{-d}$ and the $Kd$--neighborhood of the convex core of $\qf(V, \sigma_M(V))$ isometrically embeds into $M^V$ for any $V$. 

In particular, the manifold $M$ is acylindrical and the totally geodesic boundary has a large collar (of depth $Kd$) in the corresponding hyperbolic structure on $M$.
\end{theorem}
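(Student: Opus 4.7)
The plan is to adapt the construction and curvature estimates sketched for Theorem \ref{DiameterBound.theorem}. Fix $V \in \Teich(S)$ and aim to prove $d_\Teich(\sigma_M(V), \overline{Y}) \leq \tfrac{1}{2} A e^{-d}$; applying this to every $V$ yields the diameter bound. Theorem \ref{DiameterBound.theorem} corresponds to the case $W = Y$ in which $p$ is the Fuchsian cover and the embedded $d$-neighborhood of the convex core is exactly the collar of depth $d$ about the totally geodesic boundary; the generalization amounts to substituting this embedded neighborhood for the collar throughout.

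Concretely, let $\Sigma' \subset M$ be the $p$-image of the Epstein surface at distance $d$ from the convex core $C = C(W,\overline{Y})$ on the $\overline{Y}$-side; by hypothesis it is embedded. Let $U \subset M$ be the component of $M \setminus \Sigma'$ containing $\partial M$; it is isometric to the slab in $\qf(W,\overline{Y})$ between this Epstein surface and the $W$-conformal boundary. Inside $\qf(V,\overline{Y})$, let $U^V$ be the analogous slab running from the $\overline{Y}$-side Epstein surface at distance $d$ from $C(V,\overline{Y})$ out to the $V$-conformal boundary. Because both Epstein surfaces are determined by the same Schwarzian on $\overline{Y}$, the formulas of Section \ref{Epstein.surfaces.section} show that their ambient metrics in a collar of width $1$ agree up to $\BigO(e^{-d})$. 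Gluing $U^V$ in place of $U$ and smoothing over a strip of the form $S \times [n, n+1]$ produces a metric $\omega = \omega^V$ on $M^\circ$ with conformal boundary $V$ at infinity, whose sectional curvatures lie within $\BigO(e^{-d})$ of $-1$ and whose traceless Ricci tensor has pointwise and $L^2$ norms bounded by $\BigO(e^{-d})$. Since the gluing is confined to the $\overline{Y}$-side, only the Schwarzian on $\overline{Y}$ enters the error terms, and the hidden constants depend only on $\chi(W)$ and $\injrad(\overline{Y})$.

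I would then apply the Tian--Brooks machinery of Section \ref{Proof.section}: a small quasiconformal deformation realizes $\qf(V,\overline{Y})$ as a cover of a closed hyperbolic $3$-manifold, and Tian's theorem (applied after this approximation) shows that $\omega$ is $\mathcal{C}^2$-close to the hyperbolic metric on $M^V$, with closeness of order $e^{-d}$. The gluing surface is, in $(M^\circ,\omega)$, an Epstein surface of $\qf(V,\overline{Y})$ at depth $d$, so its conformal class is within $\BigO(e^{-d})$ of $\overline{Y}$, and the $\mathcal{C}^2$-closeness transports this to the image surface in $M^V$. On the other hand, under the peripheral cover $\qf(V,\sigma_M(V)) \to M^V$ the same surface lifts to one at distance comparable to $d$ from $C(V,\sigma_M(V))$ on the $\sigma_M(V)$-side -- since $C(V,\overline{Y})$ sits isometrically in $(M^\circ, \omega)$ and transports through the bilipschitz comparison to a region of $M^V$ lifting near $C(V,\sigma_M(V))$ -- and there the Epstein-surface asymptotics give conformal class within $\BigO(e^{-d})$ of $\sigma_M(V)$. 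The triangle inequality gives $d_\Teich(\sigma_M(V), \overline{Y}) = \BigO(e^{-d})$.

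The embedding claim follows from the same bilipschitz transport: the isometric embedding of $U^V \supset C(V,\overline{Y})$ in $(M^\circ,\omega)$ becomes an almost-isometric embedding in $M^V$, and composing with the small quasiconformal deformation (of parameter $\BigO(e^{-d})$) comparing $\qf(V,\overline{Y})$ to $\qf(V,\sigma_M(V))$ straightens this into an honest isometric embedding of the $Kd$-neighborhood of $C(V,\sigma_M(V))$, the factor $K$ absorbing the small loss in depth. The main obstacle, beyond extending the curvature calculations of Sections \ref{Curvature.section.1}--\ref{Curvature.section.3} to this setting, is the bookkeeping required to keep every Sobolev bound and Schwarzian estimate uniform in $V$; this is possible precisely because the gluing takes place on the $\overline{Y}$-side shared by $\qf(W,\overline{Y})$ and $\qf(V,\overline{Y})$ for every $V$, so the a priori uncontrolled conformal structures $V$ and $\sigma_M(V)$ never enter the error analysis.
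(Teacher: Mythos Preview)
Your outline follows the paper's strategy closely, but there are two genuine slips worth flagging.

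First, the sentence ``both Epstein surfaces are determined by the same Schwarzian on $\overline{Y}$'' is false: the projective structure on $\overline{Y}$ induced by $\qf(W,\overline{Y})$ depends on $W$, and likewise for $\qf(V,\overline{Y})$, so the two Schwarzians $\Schwarz\univalent_W$ and $\Schwarz\univalent_V$ differ. The conclusion you draw is nonetheless correct, but for a different reason: by \eqref{Derivative.Phi.equation} each Epstein metric is $\BigO(e^{-2t})$--close to the Fuchsian metric $g$ on $\Delta\times\RR$ (since $\scalednorm{\Schwarz\univalent}\le 3/2$ by Kraus--Nehari), hence they are $\BigO(e^{-2d})$--close to each other. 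This is why the paper's Theorem \ref{Model.manifold.theorem} interpolates through the Fuchsian metric in two steps (Section \ref{General.case.section}) rather than directly between the two quasifuchsian metrics.

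Second, and more substantively: after the Brooks approximation replaces $V$ by a nearby $X$ and closes up the manifold, Tian's theorem yields a hyperbolic metric $\rho$ on the closed $\Manifold$, and the cover corresponding to $\pi_1(M)$ is $M^Z$ for some \emph{a priori unknown} $Z$ --- not $M^V$ or even $M^X$. You cannot simply say ``$\omega$ is $\mathcal{C}^2$--close to the hyperbolic metric on $M^V$''; you must first identify $Z$. The paper does this with a second Epstein--surface comparison, this time on the $X$--side (the surface $\mathcal{G}_{-n}$ in Figure \ref{Comparison.figure}), showing $d_\Teich(Z,X)=\BigO(e^{-n})$, and only then invokes the $1$--Lipschitz property of $\sigma_M$ to pass from $\sigma_M(Z)$ to $\sigma_M(X)$ and finally to $\sigma_M(V)$. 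Your argument collapses $Z$, $X$, and $V$ into one symbol, and the peripheral cover you invoke is $\qf(Z,\sigma_M(Z))\to M^Z$, not $\qf(V,\sigma_M(V))\to M^V$. Once you insert this missing comparison, your proof is the paper's.
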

\noindent Note that $M$ being acylindrical would follow from \textit{any} bound on the diameter of the skinning map, as cylindrical manifolds always have unbounded skinning maps.

The proof of Theorem \ref{DiameterBound.Generalization.theorem} roughly follows the sketch given above.
The construction of the Riemannian metric $\eta$ in the general setting is recorded below in Theorem \ref{Model.manifold.theorem}, and may be of independent interest.

\medskip
\noindent \textbf{Miscellaneous notation.}
If $f$ and $F$ are functions of $t$, we use the Landau notation $f = \mathcal{O}(F)$ to mean that there is a constant $L$ such that $|f(t)| \leq LF(t)$ for all $t$.
If $a$, $b$, $c$, $\ldots$ are objects, we write $f(t) = \mathcal{O}_{a,b,c,\ldots}(F(t))$ if  $|f(t) | \leq L F(t)$ for a constant $L$ depending only on $a, b, c, \ldots$
We use the standard notation $W^{k,p}(\mathcal{X})$ for Sobolev spaces and follow the Einstein summation convention.

\medskip
\noindent \textbf{Acknowledgments.}  The authors thank
Ken Bromberg,
David Dumas,
Hossein Namazi,
Sean Paul,
and
Jeff Viaclovsky
for helpful conversations. 
They also thank the referee for such careful readings and numerous useful suggestions.
In particular, we are very grateful for the referee's many careful observations that  improved our estimate from $Ae^{-d/2}$ to $Ae^{-d}$.

\section{Tian's theorem}

\begin{theorem}[Tian \cite{Tian.1990}]
There are numbers $\TianC \geq 1$  and $\TianEpsilon > 0$ such that the following holds.
If $\epsilon < \TianEpsilon$ and $(M,\omega)$ is a closed Riemannian $3$--manifold with sectional curvatures pinched between $-1 - \epsilon$ and $-1 + \epsilon$,  traceless Ricci curvature $\Ricci^\omega + 2\omega = 0$ on the $1$--thin part, and 
\begin{equation}
	\sqrt{
		\int_M \, \| \Ricci^\omega + 2\omega \|_\omega^2 \ \dee V_\omega 
	\ }
	\leq \epsilon,
\end{equation}
then $M$ has a hyperbolic metric $\zeta$ such that 
$
	\smash{\| \omega - \zeta  \|_{\mathcal{C}^{2}(M,\omega)} \leq \TianC \epsilon},  
$
where\linebreak ${\| \, \cdot \,  \|_{\mathcal{C}^{2}(M,\omega)}}$ is the $\mathcal{C}^{2}$--norm with respect to $\omega$.
\qed
\end{theorem}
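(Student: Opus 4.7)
The plan is to argue by contradiction combined with Cheeger-Gromov compactness and a quantitative inverse function theorem for the Einstein operator. Suppose the conclusion fails along a sequence $(M_n,\omega_n)$ satisfying the hypotheses with $\epsilon_n \to 0$, such that no hyperbolic metric on $M_n$ lies within $n\epsilon_n$ of $\omega_n$ in the $\mathcal{C}^2$-norm.

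First I would perform a Margulis thick-thin decomposition, which is available because the sectional curvatures are uniformly pinched near $-1$. On the $1$-thick part the Margulis lemma provides a uniform lower injectivity radius bound, and together with the two-sided curvature bounds one obtains Cheeger-Gromov $\mathcal{C}^{1,\alpha}$ precompactness of a subsequence. In DeTurck (harmonic slice) gauge the Einstein equation is elliptic, so interior Schauder estimates combined with the $L^2$ smallness of the traceless Ricci upgrade this to smooth convergence to an Einstein limit, which in dimension $3$ means constant sectional curvature $-1$, i.e., hyperbolic. On the $1$-thin part the hypothesis $\Ricci^{\omega_n} + 2\omega_n = 0$ makes $\omega_n$ exactly Einstein and thus literally of nearby constant sectional curvature, so the Margulis classification identifies these regions with standard tubes and cusps whose parameters tend to the true hyperbolic ones. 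Gluing the thick limit to these thin models produces, for all large $n$, a genuine hyperbolic metric $\zeta_n$ on $M_n$ already $\mathcal{C}^2$-close to $\omega_n$.

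To sharpen this qualitative closeness into the quantitative bound, I would write $\omega_n = \zeta_n + h_n$ in DeTurck gauge and expand the Einstein operator as $\Ricci^{\zeta_n+h_n} + 2(\zeta_n+h_n) = L_{\zeta_n} h_n + Q(h_n)$, where $L_{\zeta_n}$ is (essentially) the Lichnerowicz Laplacian acting on traceless symmetric $2$-tensors and $Q$ is a quadratic error controlled by the curvature pinching. Invertibility of $L_{\zeta_n}$ on this slice is the infinitesimal Mostow-Calabi-Weil rigidity of closed hyperbolic $3$-manifolds; a standard elliptic bootstrap then converts $\|\Ricci^{\omega_n}+2\omega_n\|_{L^2} \leq \epsilon_n$ into $\|h_n\|_{\mathcal{C}^2} = \BigO(\epsilon_n)$ via Schauder estimates and Sobolev embedding in dimension $3$, contradicting the assumed failure. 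The hard part is obtaining an $n$-independent lower bound on the spectrum of $L_{\zeta_n}$, since eigenvalues on traceless $2$-tensors can in principle degenerate as the injectivity radius of $\zeta_n$ shrinks; it is exactly the hypothesis that $\omega_n$ is already Einstein on the $1$-thin part that prevents spectral mass from concentrating in the degenerating thin regions, allowing the spectral problem to be reduced to the uniformly thick part where Sobolev and Poincar\'e constants are universally controlled.
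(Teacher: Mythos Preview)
The paper does not prove this statement at all: it is quoted as a black box from Tian's paper \cite{Tian.1990} and closed with a \qed\ immediately after the statement. So there is no ``paper's own proof'' to compare your sketch against; the authors simply invoke the result.

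As for your sketch on its own terms: the overall architecture---contradiction plus Cheeger--Gromov compactness on the thick part, exploiting that in dimension $3$ Einstein equals constant curvature, then a quantitative inverse function theorem for the DeTurck--gauged Einstein operator using infinitesimal Mostow rigidity---is the standard paradigm for results of this type and is broadly in the spirit of Tian's argument. Two points deserve more care. First, your bootstrap from the $L^2$ bound on $\Ricci^\omega+2\omega$ to a $\mathcal{C}^2$ bound on $h$ does not work as stated: elliptic regularity with an $L^2$ right-hand side only yields $H^2$, which in dimension $3$ embeds into $C^{1/2}$, not $C^2$. One needs to use that the sectional curvature pinching already gives a pointwise $\BigO(\epsilon)$ bound on the traceless Ricci, and then iterate Schauder estimates (the quadratic term $Q(h)$ improves at each step) to reach $\mathcal{C}^2$; the $L^2$ hypothesis is what makes the constants independent of the volume of $M$. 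Second, your final paragraph correctly flags the uniform spectral gap for $L_{\zeta_n}$ as the crux, but the mechanism you propose is not quite right: the hypothesis ``Einstein on the $1$--thin part'' is a hypothesis on $\omega_n$, not on $\zeta_n$, so it does not by itself localize the eigenfunctions of $L_{\zeta_n}$. What it actually buys is that the perturbation $h_n$ is supported (up to lower order) on the thick part, so one can work with a cutoff or weighted version of the linearized problem where the bounded geometry of the thick part supplies uniform Sobolev and Poincar\'e constants.
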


\noindent
The background metric $\omega$ defines the pointwise norm of any tensor on $M$, and the pointwise $\mathcal{C}^2$--norm of a smoothly varying bilinear form $b$ is defined by taking the supremum of the norm of $b$ and its first two covariant derivatives with respect to $\omega$.
The norm $\| \, b \, \|_{\mathcal{C}^{2}(M,\omega)}$ is then the supremum of the pointwise norms.

\section{Hyperbolic metrics and Epstein surfaces}\label{Epstein.surfaces.section}

Most of this section is a review of Section 6.1 of \cite{Bromberg.2004.JAMS} and Sections 3.2--3.4 of \cite{Anderson.1998}. 

Let $\Delta$ be the open unit disk in $\CC$ parameterized by the variable $z = x + iy$.  
We model hyperbolic space $\HH^3$ as $\Delta \times \RR$ with the metric $\gmetric$ given by 
\begin{equation}\label{Hyperbolic.Metric.equation}
	\dee s^2 
		= \frac{4\cosh^2 t}{(1-|z|^2)^2}  \, \dee x^2 
			+\frac{4\cosh^2 t}{(1-|z|^2)^2} \,  \dee y^2 
			+ \ \dee t^2.
\end{equation}
Note that $\Delta \times \{0\}$ is a totally geodesic hyperbolic plane.
We encode $g$ in the matrix
\[
	g = (g_{ij}) =
		\left(
\begin{tikzpicture}[>=to, line width = .075em, baseline=(current bounding box.center)]
\matrix (m) [matrix of math nodes,  text height=1.5ex, text depth=0.25ex]
{
\poincare^2 \cosh^2 t & 0 & 0 \\
0 & \poincare^2 \cosh^2 t & 0 \\
0 & 0 & 1 \\
};
\end{tikzpicture}
		\right)
\]
where $\poincare = 2/(1-|z|^2)$.

Let $\Gamma^Y$ be a fuchsian group uniformizing $\overline{Y}$ via $\Delta/\Gamma^Y = \overline{Y}$.
This gives us the fuchsian hyperbolic $3$--manifold $\qf(Y,\overline{Y}) = \HH^3/\Gamma^Y$, and a we have a local expression for the hyperbolic metric on $\qf(Y,\overline{Y})$ in \eqref{Hyperbolic.Metric.equation}. 
We let $\domain{Y}$ be a compact fundamental domain for $\Gamma^Y$ acting on $\Delta$ whose interior contains zero.

\newcommand{\BB}{\mathbb{B}}

We also want to consider the Poincar\'e ball model $\BB^3$ of $\HH^3$, with boundary
the Riemann sphere $\widehat\CC$. There is a unique isometry
$\iota:\Delta\times\RR \to \BB^3$ which extends continuously to
$\Delta\times\{\pm\infinity\}$, taking $\Delta\times\{\infinity\}$ to
  $\Delta\subset\widehat\CC$ by the identity map.
  Note that on $\Delta\times\{\infinity\}$ this extension carries $\ddt$  to a vector pointing \textit{downward}, or \textit{out} of hyperbolic space. 

Let $\univalent \co \Delta \to \widehat \CC$ be a univalent function,\footnote{A function $\Delta \to \widehat \CC$ is \textit{univalent} if it is injective and holomorphic.} let
\[
\Schwarz \univalent(z) 	=
					\left( \frac{\univalent_{zz}}{\univalent_z}\right)_{\!z} - \frac{1}{2} \left(\frac{\univalent_{zz}}{\univalent_z}\right)^{\!2}
\]
be its Schwarzian derivative, and let
$\scalednorm{\Schwarz \univalent(z)} = | \, \poincare^{-2} \Schwarz
\univalent(z)\, |$. 
Let $M_{\univalent(z)}:\widehat\CC\to\widehat\CC$ be the osculating M\"obius transformation to $\univalent$ at $z$ (the M\"obius transformation with the same $2$--jet as $\univalent$ at $z$).
This uniquely extends to an isometry $M_{\univalent(z)} \co \BB^3 \to \BB^3$.
There is then a map $\Phi \co \Delta\times\RR \to \BB^3$ given by 
$$\Phi(z,t) = M_{\univalent(z)}(\iota(z,t)),$$
which also admits a continuous extension to $\Delta\times(-\infinity,\infinity]$ with $\Phi(z,\infinity) = \univalent(z)$.
We henceforth identify $\Delta$ with $\Delta \times \{\infinity\}$,
and identify both $\Delta\times\RR$ and $\BB^3$ with $\HH^3$.

There is an orthonormal basis $\ee_1$, $\ee_2$, $\ddt$ for the tangent
space to $\HH^3$ at $(z,t)$ and an orthonormal basis for the tangent
space to $\HH^3$ at $\Phi(z,t)$ such that the derivative of $\Phi$ at
$(z,t)$ is given by
\begin{equation}\label{Derivative.Phi.equation}
	\Der \Phi \big|_{(z,t)} =
	\left(
\begin{tikzpicture}[>=to, line width = .075em, baseline=(current bounding box.center)]
\matrix (m) [matrix of math nodes,  text height=1.5ex, text depth=0.25ex]
{
1 + \frac{\scalednorm{\Schwarz \univalent(z)}}{e^t \cosh t} & 0 & 0 \\
0 & 1 - \frac{\scalednorm{\Schwarz \univalent(z)}}{e^t \cosh t} & 0 \\
0 & 0 & 1 \\
};
\end{tikzpicture}
	\right).
\end{equation}
(In \cite{Bromberg.2004.JAMS}, the eigenvalues of the matrix $\Der \Phi - I$ are off by a factor of $4$.)
If we normalize (by conjugation in $\PSL_2\CC$) so that $z=0$ and so that the osculating M\"obius transformation at zero is the identity, we have 
\begin{align}
	\label{Basis.equation}
	\begin{split}
	2 \cosh t \cdot
	\ee_1 	&= \phantom{-}\cos (\theta_0) \ddx + \sin  (\theta_0)  \ddy \\
	2 \cosh t \cdot
	\ee_2 	&= -\sin (\theta_0) \ddx + \cos (\theta_0) \ddy 
	\end{split}
\end{align}
where $\theta_0$ is the argument of $\Schwarz \univalent(0)$, see Section 3.3 of \cite{Anderson.1998}.

The inequality $\| \Schwarz \univalent(z) \| \leq 3/2$ holds for univalent $\univalent$ by a celebrated theorem of Kraus \cite{Kraus.1932} and Nehari \cite{Nehari.1949}, and so $\Phi$ is an orientation--preserving immersion on $\{(z,t) \in \Delta \times \RR \ | \ t  > \log\sqrt{2} \}$, by \eqref{Derivative.Phi.equation}.

The principal curvatures of $\univalentextension(\Delta \times \{t\})$ at $(z,t)$ are given by
\begin{equation}\label{PrincipalCurvatures}
	\kappa_\pm(z,t) = \frac{1 - (1 \pm 2 \| \Schwarz \univalent(z) \|)e^{-2t}}{1 + (1 \pm 2 \| \Schwarz \univalent(z) \|)e^{-2t}} ,
\end{equation}
when this is defined, see Proposition 6.3 of \cite{Bromberg.2004.JAMS}.
If $\Schwarz \univalent(z) = 1$, then $\kappa_+(z,t) = \coth t$ by continuity of the principal curvatures.
These curvatures are positive provided $t > \log 2$, thanks to the Kraus--Nehari theorem, and so $\univalentextension(\Delta \times \{t\})$ is locally convex for such $t$.\footnote{The statement in \cite{Bromberg.2004.JAMS} that the $\univalentextension(\Delta \times \{t\})$ are convex when $t > 0$ is an error.}

We now specialize to univalent $\univalent$ associated to ends of hyperbolic $3$--manifolds.

Let $M$ be a complete hyperbolic $3$--manifold with conformally compact incompressible end $\exitexit = \exitexit^M$ compactified by the Riemann surface $\overline{Y}$. 
Let $\Gamma = \Gamma^M \subset \PSL_2(\CC)$ be a Kleinian group uniformizing $M$.

Blurring the distinction between ends and their neighborhoods, the end $\exitexit$ is homeomorphic to $S \times (0,\infinity)$, and we pick isomorphisms $\pi_1(E) \leftarrow \pi_1(S) \to \Gamma^{\overline{Y}}$  compatible with the chosen marking of $\overline{Y}$.
Choose a component $\UU^{\overline{Y}}$ of the domain of discontinuity of $\Gamma^M$ corresponding to $\exitexit$, let $\univalent \co \Delta \to \UU^{\overline{Y}}$ be the $\pi_1(S)$--equivariant Riemann mapping, and let $\Phi \co \Delta \times (\log 2, \infinity] \to \HH^3 \cup \UU^{\overline{Y}}$ be as above.
By the above discussion, the map $\Phi$ is an immersion and the surfaces $\Phi(\Delta \times \{t\})$ are locally strictly convex.

Given a smooth surface $\mathcal{F}$ and a locally strictly convex immersion $f \co \mathcal{F} \to \HH^3$, there is an associated \textit{Gauss map} $\mathfrak{g} \co\mathcal{F} \to \widehat{\CC}$, defined as follows.
For any $w$ in $\mathcal{F}$, there is a neighborhood $U$ of $w$ on which $f$ is an embedding.
Since $f$ is locally strictly convex, there is a unique geodesic ray emanating from $f(w)$ that is perpendicular to $f(U)$ and moves away from the center of curvature.
This geodesic ray has a unique endpoint $g(w)$ in $\widehat{\CC}$, and this defines a  map $\mathfrak{g} \co \mathcal{F} \to \widehat{\CC}$.

Note that it follows from (\ref{Derivative.Phi.equation}) that the
Gauss map $\mathfrak{g}_t \co \Delta \times \{t\} \to \widehat{\CC}$
associated to the immersion $\Phi_t = \Phi|_{\Delta \times \{t\}}$ is
equal to the embedding $\univalent \circ p_t$, where $p_t \co \Delta \times
\{t\} \to \Delta$ is the projection $p_t(z,t) = z$.  
In particular,
$\mathfrak{g}_t$ descends to an injective map on $\Delta/\pi_1(S)$.

Let
$
\Xi \co \Delta/\pi_1(S) \times (\log 2, \infinity] \to \exitexit \cup \overline{Y}
$
be the immersion induced by $\Phi$ and let $\Xi_{\,t}$ be the restriction of $\Xi$ to $\Delta/\pi_1(S) \times \{t\}$.

We claim that $\Xi$ is a diffeomorphism.\footnote{The argument given here is implicit in Sections 3 and 6 of \cite{Bromberg.2004.JAMS}.}

To see this, first note that since $\Xi_{\,\scriptinfinity}$ is a diffeomorphism and $S$ is compact, the implicit function theorem provides a $t_0$ such that $\Xi$ is an embedding when restricted to $\Delta/\pi_1(S) \times [t_0,\infinity]$.

Suppose that $\Xi$ is not a diffeomorphism, and let $t_1$ be the largest $t$ in $(\log 2, \infinity)$ such that $\Xi$ is not injective on $\Delta/\pi_1(S) \times [t,\infinity)$. 
So $\Xi_{\,t_1}$ is not injective, and we have points $a$ and $b$ in $\Delta/\pi_1(S) \times \{t_1\}$ for which  $\Xi_{\,t_1}(a) = \Xi_{\,t_1}(b)$.
Local strict convexity implies that $\Xi_{\,t_1}$ must have a self--tangency at these points, or else there would be a slightly later time $t_2$ at which $\Xi_{t_2}$ failed to be an embedding.
Furthermore, the normal vectors pointing away from the centers of curvature must agree, or else there would again be a slightly later time when $\Xi_t$ failed to embed.
Lifting the map $\Xi_{\,t_1}$ to the map $\Phi_{\,t_1}$, we find distinct points $\widetilde{a}$ and $\widetilde{b}$ in $\Delta \times \{t_1\}$ such that $\mathfrak{g}_{t_1}(\widetilde{a}) = \mathfrak{g}_{t_1}(\widetilde{b})$, contradicting injectivity of the Gauss map $\mathfrak{g}_{t_1}$.
We conclude that $\Xi$ is a diffeomorphism.

Let $\EP{}{t} = \EP{M}{t}$ be the image of $\Xi_{\,t}$.
We call the $\EP{}{t}$ \textit{Epstein surfaces}, in honor of their study by C. Epstein \cite{Epstein.1984}, who calls them \textit{Weingarten surfaces}.

\section{Gluing hyperbolic metrics}

We prove the following gluing theorem for hyperbolic manifolds, which says roughly the following.
If two hyperbolic manifolds $M^1$ and $M^2$ contain separating product regions $S \times I$ isometric to products taken from far out in conformally compact $\overline{Y}$--ends of two other hyperbolic manifolds $N^1$ and $N^2$, then these regions cut $M^1$ and $M^2$ into convex pieces $\mathcal{A}^1$ and $\mathcal{A}^2$ and concave pieces $\mathcal{B}^1$ and $\mathcal{B}^2$.
The theorem says that one may glue $\mathcal{A}^1$ to $\mathcal{B}^2$ so that the resulting manifold admits a Riemannian metric that is very nicely behaved near the gluing site and hyperbolic elsewhere.  
In particular, if the resulting manifold is closed, this Riemannian metric satisfies the hypotheses of Tian's theorem, and is thus close to the unique hyperbolic metric.
See Figure \ref{GlueGeneral.figure}.

\begin{figure}
\bigskip
\bigskip
\bigskip
\bigskip
\bigskip
\bigskip
\resizebox{.59\textwidth}{!}{\input{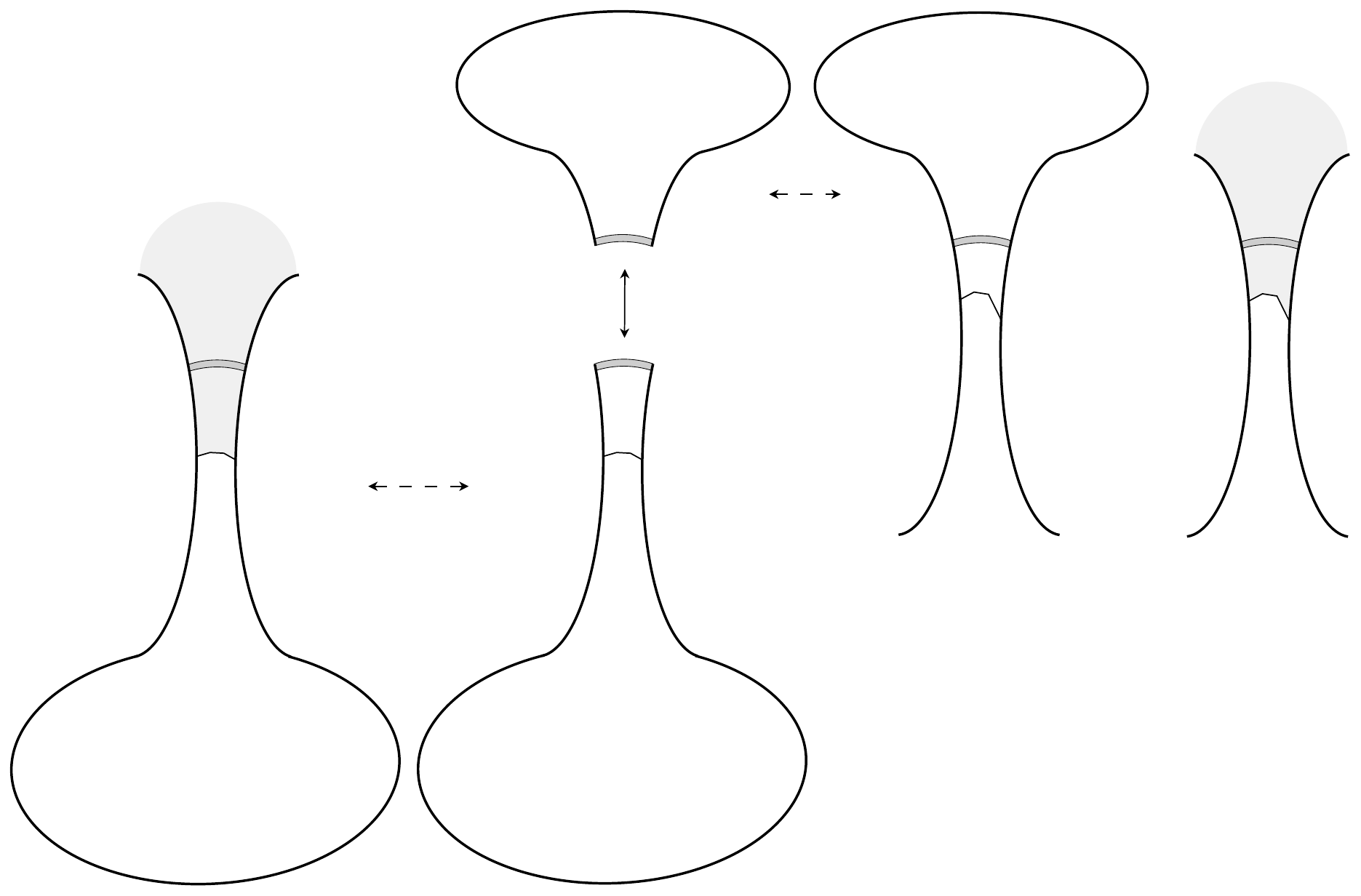tex_t}}
\caption{Building $\Manifold$ with the metric $\eta$. The case $M^1=N^1$ is shown for simplicity.}\label{GlueGeneral.figure}
\end{figure}

\begin{theorem}[Model manifold for convex cocompact gluings]\label{Model.manifold.theorem}
Given positive numbers $\epsilon$ and $m$, there is a constant $\AAconstant = \AAconstant(m, \epsilon)$ such that the following holds.

Let $\overline{Y}$ be a marked Riemann surface with $\chi(\overline{Y}) \geq - m$ and $\injrad(\overline{Y}) \geq \epsilon$.

For $i$ in $\{1,2\}$, let $(N^i, \zeta^i)$ be a complete hyperbolic $3$--manifold with a conformally compact (marked) incompressible end $\exitexit^i$ conformally compactified by $\overline{Y}$.
Let $\EP{i}{t}$ be the foliation of $\exitexit^i$ by Epstein surfaces.
For $\log 2 < a < b$, let $\exitexit^i_{[a,b]}$ be the compact region of $\exitexit^i$ bounded by $\EP{i}{a}$ and $\EP{i}{b}$.

Let $(M^i, \xi^i)$ be a complete hyperbolic manifold containing a separating submanifold isometric to $\exitexit^i_{[n,n+3]}$.
For $t$ in $[a,b]$, let $\mathcal{A}^i_t$ be the closure of the convex component of $M^i - \EP{i}{t}$, and let $\mathcal{B}^i_t$ be the closure of the concave component.

Let $\Manifold$ be the topological manifold obtained by gluing $\mathcal{A}^{1}_n$ to $\mathcal{B}^2_n$ along their boundaries respecting the marking of $\overline{Y}$.

Then there is a Riemannian metric $\eta$ on $\Manifold$ that satisfies the following.
\begin{enumerate}
\item\label{Model.manifold.theorem.item.einstein} The inclusions of $(\mathcal{A}^1_n, \xi^1)$ and $(\mathcal{B}^2_{n+3}, \xi^2)$ into $(\Manifold, \eta)$ are isometric embeddings.
\item\label{Model.manifold.theorem.item.sectional} The sectional curvatures of $\eta$ are within $\AAconstant e^{-2n}$ of $-1$.
\item\label{Model.manifold.theorem.item.twonorm} The $L^2$--norm of the traceless Ricci curvature of $\eta$ is no more than $\AAconstant e^{-n}$. 
\item\label{Model.manifold.theorem.item.radius} The injectivity radius of $\eta$ on $\exitexit^2_{[n,n+3]}$ is at least $(1- \AAconstant e^{-2n})\injrad(\overline{Y})\cosh n$.
\end{enumerate}
\end{theorem}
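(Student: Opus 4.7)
The plan is to work entirely in the Epstein-surface coordinates provided by the foliations $\EP{i}{t}$ of the two ends. On each region $\exitexit^i_{[n,n+3]}$, the Epstein diffeomorphism $\Xi^i$ identifies it with $\overline{Y} \times [n,n+3]$ (or, upstairs, $\Delta \times [n,n+3]$ with the $\pi_1(S)$--action), and the hyperbolic metric $\xi^i$ pulls back to a metric $\hat\xi^i$ whose derivative at each point is read off of \eqref{Derivative.Phi.equation}--\eqref{Basis.equation} from the univalent map $\univalent^i \co \Delta \to \widehat\CC$ uniformizing the $\overline{Y}$--end of $N^i$. Reading off that formula, one sees that $\hat\xi^i$ differs from the fuchsian reference metric $\gmetric$ of \eqref{Hyperbolic.Metric.equation} by a symmetric $2$--tensor of pointwise size $\mathcal{O}(e^{-2t} \scalednorm{\Schwarz \univalent^i})$ in Epstein coordinates.

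Fix a smooth cutoff $\chi \co \RR \to [0,1]$ with $\chi \equiv 0$ on $(-\infty,n+1]$, $\chi \equiv 1$ on $[n+2,\infty)$, and universally bounded derivatives. Define the Riemannian metric $\eta$ on $\Manifold$ by declaring $\eta = \xi^1$ on $\mathcal{A}^1_n$, $\eta = \xi^2$ on $\mathcal{B}^2_{n+3}$, and on the common intermediate region, identified simultaneously with $\overline{Y} \times [n,n+3]$ via $\Xi^1$ and $\Xi^2$,
\[
\eta \;=\; (1-\chi(t))\,\hat\xi^1 \;+\; \chi(t)\,\hat\xi^2.
\]
For $n$ larger than some threshold $\TTconstant(\epsilon,m)$, both $\hat\xi^i$ are $\mathcal{C}^2$--close to $\gmetric$, so this convex combination is positive definite, is smooth, and matches $\xi^1$ identically in a collar of $\EP{1}{n}$ and $\xi^2$ identically in a collar of $\EP{2}{n+3}$. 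This gives conclusion \eqref{Model.manifold.theorem.item.einstein}.

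For \eqref{Model.manifold.theorem.item.sectional} and \eqref{Model.manifold.theorem.item.twonorm}, expand $\eta = \gmetric + e^{-2t}\,T$ in Epstein coordinates, where $T$ is a smooth symmetric $2$--tensor whose pointwise value is a combination of $\Schwarz\univalent^i$, $\chi$, and $\chi'$. The standard expansion of the Christoffel symbols and the Riemann tensor of a perturbation of a hyperbolic metric then gives $|R^\eta + 1| = \mathcal{O}(e^{-2t})$ pointwise in the transition region (and identically $0$ outside), provided one controls the first two derivatives of $T$ in the Epstein frame; since $\gmetric$ is Einstein, the same expansion produces $\|\Ricci^\eta + 2\eta\|_\eta = \mathcal{O}(e^{-2t})$ in the transition region. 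The $L^2$ bound then follows because the Epstein volume form scales like $\cosh^2 t \asymp e^{2t}$ on horizontal slices, so $\int \|\Ricci^\eta + 2\eta\|_\eta^2 \,\dee V_\eta = \mathcal{O}(e^{-4n}) \cdot \mathcal{O}(e^{2n}) = \mathcal{O}(e^{-2n})$ over the transition region. For \eqref{Model.manifold.theorem.item.radius}, the restriction of $\hat\xi^2$ to the slice $\overline{Y} \times \{t\}$ is, by the diagonal form of \eqref{Derivative.Phi.equation}, a $(1 + \mathcal{O}(e^{-2t}))$--bilipschitz perturbation of $\cosh^2 t$ times the hyperbolic metric on $\overline{Y}$, and $\eta$ agrees with $\hat\xi^2$ on $\exitexit^2_{[n,n+3]}$ where $\chi\equiv 1$ (and is close to it elsewhere in that region), so the injectivity radius grows as claimed.

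The principal obstacle is the curvature computation in items \eqref{Model.manifold.theorem.item.sectional} and \eqref{Model.manifold.theorem.item.twonorm}: to compute the Riemann tensor of $\eta$ one must differentiate the perturbation $T$ twice in the horizontal directions, which requires pointwise control on the first two derivatives of the Schwarzian $\Schwarz\univalent^i$ beyond the Kraus--Nehari $L^\infty$ bound $\scalednorm{\Schwarz\univalent^i} \leq 3/2$. This is precisely where the hypothesis $\injrad(\overline{Y}) \geq \epsilon$ enters: it converts the universal sup bound on $\Schwarz\univalent^i$, via elliptic/Sobolev estimates on the compact quotient $\overline{Y}$, into pointwise bounds on $\nabla \Schwarz \univalent^i$ and $\nabla^2 \Schwarz\univalent^i$ depending only on $\epsilon$ and $m$, absorbing the resulting constants into $\AAconstant(m,\epsilon)$.
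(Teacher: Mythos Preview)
Your proposal is correct and follows essentially the same strategy as the paper: pull both hyperbolic metrics back to Epstein coordinates on $\overline{Y}\times[n,n+3]$, observe via \eqref{Derivative.Phi.equation}--\eqref{Basis.equation} that each differs from the fuchsian reference $\gmetric$ by a tensor of size $\BigO(e^{-2t})$ whose $\mathcal{C}^2$--norm is controlled once $\injrad(\overline{Y})\geq\epsilon$ gives $W^{2,\infty}$ bounds on the Schwarzians, and interpolate with a cutoff. The one organizational difference is that the paper first treats the special case $M^2=N^2=\qf(Y,\overline{Y})$ (so that $\hat\xi^2=\gmetric$ and only one Schwarzian appears in the interpolation), and then reduces the general case to this by passing through a fuchsian strip via a two--step cut--and--paste over $[n,n+1]$ and $[n+2,n+3]$; you instead interpolate $(1-\chi)\hat\xi^1+\chi\hat\xi^2$ directly in one step. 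Both routes rest on the identical core estimate $\|\hat\xi^i-\gmetric\|_{\mathcal{C}^2}=\BigO(e^{-2n})$ (the paper's Proposition~\ref{Metrics.close.proposition}), so the curvature and $L^2$ bounds follow the same way; your direct interpolation is marginally more streamlined, while the paper's reduction keeps the explicit matrix computation \eqref{Difference.of.metrics.x.equation}--\eqref{Difference.of.metrics.x.P} to a single Schwarzian at a time.
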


\begin{proof}
We begin by assuming that $M^1 = N^1$ and that $M^2 = N^2 = \qf(Y,\overline{Y})$.

\subsection{The proof of Theorem \ref{Model.manifold.theorem} when $M^1 = N^1$ and that $M^2 = N^2 = \qf(Y,\overline{Y})$.}

Let $s_0(t)$ be a smooth nonincreasing function on $\RR$ such that $s_0(t) = 1$ when $t \leq 0$ and $s_0(t) = 0$ when $t \geq 1$.
Let $s_n(t) = s_0(t-n)$.
As the Sobolev norms of the $s_n$ are independent of $n$, we write $s(t)$ for $s_n(t)$ and let context dictate $n$.

Let $\Phi \co \Delta\times\RR \to \BB^3$ be the map associated to $\exitexit^1$ constructed in Section \ref{Epstein.surfaces.section}, and let $h = \Phi^*g$ be the pullback of the hyperbolic metric via $\Phi$.
We interpolate between the hyperbolic metrics $g$ and $h$ on $\Delta \times [n, n+1]$ using the metric
\begin{equation}\label{Eta.metric.equation}
\eta = (1-s(t)) g + s(t) h,
\end{equation}
which descends to a metric on $S \times [n,n+1]$ that we also call $\eta$, see Figure \ref{GlueQF.figure}.

By \eqref{Derivative.Phi.equation}, \eqref{Eta.metric.equation}, and the Kraus--Nehari theorem, we have the following proposition, which provides Part \ref{Model.manifold.theorem.item.radius} of Theorem \ref{Model.manifold.theorem}.

\begin{proposition}\label{Bilipschitz.Jacobian.proposition}
The identity map $\iota \co (S \times [n,n+1], g) \to (S \times [n,n+1], \eta)$ is \linebreak $(1 + \BigO_s(e^{-2n}))$--bilipschitz with Jacobian determinant $1+ \BigO_s(e^{-4n})$.
\qed
\end{proposition}

To prove the other parts of Theorem \ref{Model.manifold.theorem} when $M^1 = N^1$ and $M^2 = N^2 = \qf(Y,\overline{Y})$, we begin by showing that the traceless Ricci curvature of our metric $\eta$ is on the order of $e^{-2n}$.
The region where $\eta$ is nonhyperbolic has volume on the order of $e^{2n}$, and so
it will follow that the $L^2$--norm of the traceless Ricci curvature is on the order of $e^{-n}$. 
Since we are in dimension three, having Ricci curvature on the order of $e^{-2n}$ implies that the difference between the sectional curvatures and $-1$ is on the order of $e^{-2n}$ as well.

The intuition for the estimate of the Ricci curvature is as follows.
The Ricci curvature measures the infinitesimal defect in volume of a sharp geodesic cone compared to the corresponding Euclidean cone:
the volume element of a metric $\omega$ at a point $p$ admits an asymptotic expansion in $\omega$--geodesic normal coordinates
\begin{equation}\label{General.volume.expansion}
\dee V_\omega = \left(1 - \frac{1}{6} \Ricci^\omega(\uu) \epsilon^2 + \BigO(\epsilon^3)\right) \epsilon^{2} \,\dee \epsilon \, \dee A(\uu)
\end{equation}
where $\Ricci^\omega$ is the Ricci curvature of $\omega$ considered a quadratic form, and $\dee A(\uu)$ is the canonical spherical measure on the unit tangent space $\mathrm{T}_p^1 M$, see section 3.H.4 of \cite{Gallot.Hulin.Lafontaine.2004}.
Our metric $\eta$ is obtained by gluing two hyperbolic metrics on $S \times \RR$ together fiberwise.
The original metrics on the fibers are exponentially close, as are the original normal curvatures to the fibers, and so, after interpolating to obtain $\eta$, the volumes of cones are disturbed an exponentially small amount.
One may try to make this precise using \eqref{General.volume.expansion} and Proposition \ref{Bilipschitz.Jacobian.proposition}.
This shows that the Ricci curvatures are close, but depends on the precise rate of decay of the $\BigO(\epsilon^3)$ term in $\dee V_\eta$.
Fortunately, the Ricci curvatures are fairly easy to estimate directly.

\subsection{Bounds on curvatures}\label{Curvature.section}

If $\omega = \omega_{ij} \, \dd x^i \, \dd x^j$ is a Riemannian metric, we have Christoffel symbols
\begin{equation}
	\Gamma_{ij}^\ell(\omega)	= \frac{1}{2} \omega^{k\ell}
							\left(
								\frac{\partial }{\partial x^i} \omega_{kj}
								+ \frac{\partial }{\partial x^j} \omega_{ik}
								- \frac{\partial }{\partial x^k} \omega_{ij}
							\right),	
\end{equation}
where $(\omega^{ij}) = (\omega_{ij})^{-1}$.

\subsubsection{Bounding the Ricci curvature}\label{Curvature.section.1}

The Ricci curvature tensor $\Ricci^\omega = \Ric_{ij}^\omega \, \dd x^i \, \dd x^j$ of a metric $\omega$ in coordinates $x^i$ is given by
\begin{align*}
\Ric_{ij}^\omega
& = 	\left(
		\frac{\partial \Gamma_{ij}^\ell}{\partial x^\ell} 
		- \frac{\partial \Gamma_{i\ell}^\ell}{\partial x^j} 
		+ \Gamma_{ij}^\ell \Gamma_{\ell m}^m
		- \Gamma_{i \ell}^m \Gamma_{jm}^\ell
	\right)
(\omega).
\end{align*}

\begin{theorem}\label{RicciCurvatureEstimate.theorem}
If $\uu$ is an $\eta$--unit vector at $(z,t)$ in $S \times [n,n+1]$, then 
\begin{equation}
\Ricci^\eta(\uu) - \Ricci^g(\uu) = \mathcal{O}(e^{-2n}).
\end{equation}
\end{theorem}
\noindent Theorem \ref{RicciCurvatureEstimate.theorem}  follows immediately from the following theorem.
\begin{theorem}\label{G.ETA.proximity.theorem}
We have
$
\| \, \eta - g \, \|_{\mathcal{C}^2(S \times [n,n+1], \, g)} = \BigO(e^{-2n}).
$
\end{theorem}
\begin{proof}
Since the norms $\|\Schwarz \univalent(z)\|$ and arguments $\arg(\Schwarz \univalent(z))$ are smooth functions away from the zeroes of $\Schwarz \univalent$, and the set of points $(z,t)$ such that $\Schwarz \univalent(z) = 0$ is a finite set of lines in $S \times \RR$, we restrict attention to points $(z,t)$ such that $\Schwarz \univalent(z) \neq 0$.


Let $\calQ(\overline{Y})$ be the vector space of holomorphic quadratic differentials on $\overline{Y}$.
By the Kraus--Nehari theorem, the subset of $\calQ(\overline{Y})$ consisting of Schwarzian derivatives of developing maps of Kleinian projective structures on $Y$ is compact, see \cite{Bers.1970.Boundaries}.
So there is a number $\bconstant = \bconstant(Y)$ bounding the values and first few partial derivatives of the norms $\|\Schwarz \univalent(z)\|$ and arguments $\arg(\Schwarz \univalent(z))$ on the fundamental domain $\domain{Y}$.
In other words, the functions $\|\Schwarz \univalent(z)\|$ and $\arg(\Schwarz \univalent(z))$ have Sobolev norms $\| \ \cdot \ \|_{W^{2,\scriptinfinity}(\domain{Y})}$ at most $\bconstant$.
In fact, if we fix a compact subset $\mathcal{X}$ of $\Teich(\overline{Y})$ containing $\overline{Y}$, we obtain a uniform bound $\bbconstant = \bbconstant(\mathcal{X})$ on these Sobolev norms over all of $\mathcal{X}$.
As the thick part of the moduli space $\mathcal{M}(\overline{Y})$ is compact \cite{mumford}, the action of the mapping class group $\Mod(S)$ on $\Teich(\overline{Y})$ provides a bound $\bbbconstant = \bbbconstant(\chi(S), \injrad(Y))$ on these Sobolev norms over the entire thick part of $\Teich(\overline{Y})$.

The notation $\mathcal{O}(\ \cdot \ )$ will now always mean $\mathcal{O}_{\chi(S), \, \injrad(Y)}(\ \cdot \ )$.


Consider a point $(w,t)$ in $\Delta \times [0,\infinity)$ with $\Schwarz \univalent(w) \neq 0$.
Composing $\univalent$ on both sides with M\"obius transformations, we may assume that $w = 0$ and $M_{\univalent(0)} = \mathrm{Id}$.

We work with a small ball $\ball \subset \domain{Y} \subset \Delta$ centered at $0$ containing no zeroes of $\Schwarz \univalent$.

Let $z$ be a point of $\ball$, and let $\psi_z$ be the hyperbolic element of $\PSL_2\CC$ stabilizing $\Delta$ that carries $0$ to $z$ and whose axis in $\Delta$ contains $0$.
By the invariance of the Schwarzian, we have 
\begin{equation}
\Schwarz (\univalent \circ \psi_z) (0) 
	= \Schwarz \univalent (\psi_z(0)) \psi_z'(0)^2
	= \Schwarz \univalent (z) \psi_z'(0)^2.
\end{equation}
Note that $\psi_z'(0) = 1/(1-|z|^2)$.
We may postcompose $\univalent \circ \psi_z$ with a M\"obius transformation to ensure that the osculating M\"obius transformation to $\univalent \circ \psi_z$ at $0$ is the identity, and this has no effect on the Schwarzian.
Let
\[
	\theta_z = \arg(\Schwarz \univalent(z)) - \arg\left(\psi_z'(0)^2\right).
\]
A change of variables allows us to assume that $\theta_0 = 0$.
Let
\begin{equation}
	A_z =
	\left(
\begin{tikzpicture}[>=to, line width = .075em, baseline=(current bounding box.center)]
\matrix (m) [matrix of math nodes,  text height=1.5ex, text depth=0.25ex]
{
\cos(\theta_z) & - \sin(\theta_z) & 0 \\
\sin(\theta_z) & \cos(\theta_z) & 0 \\
0 & 0 & 1 \\
};
\end{tikzpicture}
	\right).
\end{equation}

Consider coordinates $u^1 =  \cosh (t) \, x$, $u^2= \cosh (t) \, y$, and $u^3=t$.
In these coordinates, the metric $g$ is given by
\[
	g = (g_{ij}) =
		\left(
\begin{tikzpicture}[>=to, line width = .075em, baseline=(current bounding box.center)]
\matrix (m) [matrix of math nodes,  text height=1.5ex, text depth=0.25ex]
{
\poincare^2  & 0 & 0 \\
0 & \poincare^2  & 0 \\
0 & 0 & 1 \\
};
\end{tikzpicture}
		\right),
\]
where $\poincare = 2/(1-|z|^2)$.

By \eqref{Basis.equation}, we have
\begin{equation}\label{}
	\Der \Phi \big|_{(z,t)} =
	A_z
	\left(
\begin{tikzpicture}[>=to, line width = .075em, baseline=(current bounding box.center)]
\matrix (m) [matrix of math nodes,  text height=1.5ex, text depth=0.25ex]
{
1 + \frac{\scalednorm{\Schwarz \univalent(z)}}{e^t \cosh t} & 0 & 0 \\
0 & 1 - \frac{\scalednorm{\Schwarz \univalent(z)}}{e^t \cosh t} & 0 \\
0 & 0 & 1 \\
};
\end{tikzpicture}
	\right)
	A_z^{-1}
\end{equation}
in $\ball \times \RR$ with respect to the orthonormal basis $\frac{1}{\lambda}\dduone$, $\frac{1}{\lambda}\ddutwo$, $\dduthree$ to $\HH^3$ at $(z,t)$ and an orthonormal basis at $\Phi(z,t)$.
Note that $(g_{ij})$, a scalar matrix when restricted to the subspace $\dduone$ and $\ddutwo$, commutes with the matrix $\Der \Phi^T = \Der \Phi$.

Writing the metric $\eta = (1-s(t))\, g + s(t)\, h$ in these coordinates, we have
\begin{align}
	(\eta_{ij})
		& = (1-s(t))(g_{ij}) + s(t)	\, \Der \Phi^T \cdot (g_{ij}) \cdot \Der \Phi \\
		& = (1-s(t))(g_{ij}) + s(t)	\,  \cdot (g_{ij}) \cdot (\Der \Phi)^2.
\end{align}
Expanding this, we have
\begin{equation}\label{Difference.of.metrics.x.equation}
	(\eta_{ij})= 
	(g_{ij}) +
	\frac{1}{e^{t}\cosh t}
	\cdot 
	s(t) \lambda^2 \scalednorm{\Schwarz \univalent(z)}
	\cdot
	P,	
\end{equation}
where
\begin{equation}\label{Difference.of.metrics.x.P}
	P= 
	\left(
\begin{tikzpicture}[>=to, line width = .075em, baseline=(current bounding box.center)]
\matrix (m) [matrix of math nodes,  text height=2.5ex, text depth=0.25ex]
{
  2 \cos(2\theta_z) + \frac{\scalednorm{\Schwarz \univalent(z)}}{e^t \cosh t}
	& 	\ \ \ 2\sin(2\theta_z) 
		 & 0 \\
2\sin(2\theta_z) 
	& -2\cos(2\theta_z) + \frac{\scalednorm{\Schwarz \univalent(z)}}{e^t \cosh t}   & 0 \\
0 & 0 & 0 \\
};
\end{tikzpicture}
	\right)
	.
\end{equation}
\begin{proposition}\label{Metrics.close.proposition}
For all $i$, $j$, $k$, and $\ell$, we have
\[
	\left| \,
		\eta_{ij} - g_{ij}
	\, \right|
	 = \BigO(e^{-2n}),
\]
\[
	 \left| \,
		\ddxk \eta_{ij} - \ddxk g_{ij}
	\, \right|
	 = \BigO(e^{-2n}),
\]
and
\[
	 \left| \,
		\ddxl \ddxk \eta_{ij} - \ddxl \ddxk g_{ij}
	\, \right|
	 = \BigO(e^{-2n}),
\]
in coordinates $u^1 =  \cosh(t) \, x$, $u^2 =  \cosh(t) \, y$, and $u^3 = t$ on $S \times [n,n+1]$.
\end{proposition}
\begin{proof}
The estimates hold on $\ball$ by inspection of \eqref{Difference.of.metrics.x.equation} and \eqref{Difference.of.metrics.x.P}, and, since the zeroes of $\Schwarz \univalent$ are isolated and our metrics are smooth, they hold on all of $S \times [n,n+1]$ by continuity.
\end{proof}

Now, in the coordinates $u^i$, the first few derivatives of the $g_{ij}$ and $g^{ij}$ are $\BigO(1)$, and so Proposition \ref{Metrics.close.proposition} implies that $\| \, \eta - g \, \|_{\mathcal{C}^2(S \times [n,n+1], \, g)} = \BigO(e^{-2n})$, by definition of the $\mathcal{C}^2$--norm.

This completes the proof of Theorem \ref{G.ETA.proximity.theorem} (and hence of Theorem \ref{RicciCurvatureEstimate.theorem}).
\end{proof}

\subsubsection{$L^2$--norm of the traceless Ricci curvature}\label{Curvature.section.2}

To estimate the $L^2$--norm of the traceless Ricci curvature of $\eta$,
we begin by estimating the volume of the nonhyperbolic part.

\begin{lemma}\label{VolumeBound.lemma}
We have
\begin{equation}
\int_{S \times [n,n+1]} 1 \ \dee V_\eta
\leq -\Cconstant\chi(S) e^{2 n}.
\end{equation}
\end{lemma}
\begin{proof}
Let $\iota \co (S \times [n, n+1], g) \to (S \times [n, n+1], \eta)$ be the identity map.
By \eqref{Derivative.Phi.equation}, the Jacobian determinant of $\iota$ at $(z,t)$ is
\begin{equation}
|\ \mathrm{Jac}\ \iota \,| = 1 - \left(\frac{\scalednorm{\Schwarz \univalent(z)}}{e^t \cosh t}\right)^{\!\!2}.
\end{equation}

So
\begin{align*}
\int_{S \times [n,n+1]} 1 \ \dee V_\eta
		& = \int_{S \times [n,n+1]} |\ \mathrm{Jac}\ \iota \,| \ \dee V_g
		\\
		& \leq \int_{S \times [n,n+1]} 1 \ \dee V_g
		\\
		& = \int_{\domain{Y} \times [n,n+1]} \sqrt{\det g\ } \ \dee x  \, \dee y \, \dee t
		\\
		& =	
			\int_{\domain{Y} \times [n,n+1]} \ \poincare^2 \cosh^2 t  \ \dee x  \, \dee y \, \dee t
		\\
		& =
			\int_S \left(\int_{[n,n+1]}  \cosh^2 t  \ \dee t  \right)\dee A_Y
		\\
		&	\leq -2\pi\chi(S) \, e^{2n+2}
		\\
		& \leq -  18 \pi \chi(S) \, e^{2n}
		. \qedhere
\end{align*}
\end{proof}

For an $\eta$--unit vector $\uu$ at $(z,t)$, we have
\begin{equation}
\Ricci^\eta(\uu) + 2\eta(\uu) = \Ricci^g(\uu) + 2 g(\uu) + \BigO(e^{-2t}) =   \BigO(e^{-2t}) ,
\end{equation}
by Theorem \ref{RicciCurvatureEstimate.theorem}.
So there is a constant $\Bconstant = \Bconstant(\chi(S), \injrad(Y))$ such that
\begin{equation}\label{RicciCurvaturePointwiseBound.equation}
\| \Ricci^\eta + 2\eta \|_\eta \leq \Bconstant e^{-2n}.
\end{equation}

\begin{lemma}\label{RicciIntegral.lemma}
We have
\begin{equation}\label{RicciIntegral.estimate}
	\sqrt{
		\int_M \| \Ricci^\eta + 2\eta \|_\eta^2 \ \dee V_\eta \
	\ }
\leq
	-  \Cconstant \, \Bconstant \chi(S) \, e^{-n} 
	. 
\end{equation}
\end{lemma}
\begin{proof}
Since $\eta$ is hyperbolic away from $S \times [n,n+1]$, Lemma \ref{VolumeBound.lemma} and  \eqref{RicciCurvaturePointwiseBound.equation} give us
\begin{align*}
\sqrt{
\int_M \| \Ricci^\eta + 2\eta \|_\eta^2 \ \dee V_\eta \
}
& 
	\leq
	\sqrt{
		\int_{S \times [n,n+1]} \Bconstant^2 e^{-4n} \ \dee V_\eta \
	\ }
\\
&
	= 
	\Bconstant e^{-2n} 
	\sqrt{
		\int_{S \times [n,n+1]} 1 \ \dee V_\eta \
	}
\\
&
	\leq
	\Bconstant e^{-n} \sqrt{- \Cconstant \, \chi(S)\, }
\\
&
	\leq
	- \Cconstant \, \Bconstant  \chi(S) \, e^{-n} 
	. \qedhere
\end{align*}
\end{proof}

\subsubsection{Sectional curvatures}\label{Curvature.section.3}

In dimension three, the sectional curvatures are determined by the Ricci curvatures.  
More specifically, if $\uu$, $\vv$, and $\ww$ are orthonormal tangent vectors at a point in a $3$--manifold, we have
\begin{equation}\label{SectionalFromRicci.equation}
2K(\uu,\vv) = \Ricci(\uu) - \Ricci(\ww) + \Ricci(\vv).
\end{equation}
Theorem \ref{RicciCurvatureEstimate.theorem} and \eqref{SectionalFromRicci.equation} give us
\begin{equation}
2K^\eta(\uu,\vv) \big|_{(z,t)} = -2 + 2 - 2 + \BigO(e^{-2t})
\end{equation}
for any $\eta$--orthonormal vectors $\uu$ and $\vv$ at any $(z,t)$ in $\Delta \times [n,n+1]$.
So
\begin{equation}\label{Sectional.estimate}
K^\eta(\uu,\vv) \big|_{(z,t)} =  - 1 + \BigO(e^{-2n})
\end{equation}
for all $(z,t)$, since $K^\eta = -1$ on $M - (\Delta \times [n,n+1])$.
So there is a constant $\Dconstant = \Dconstant(\chi(S),\injrad(Y)) \geq \Bconstant$ such that
\begin{equation}\label{Sectional.precise.estimate}
-1 - \Dconstant e^{-2n} \leq K^\eta \leq -1 + \Dconstant e^{-2n}.
\end{equation}

Let $\Econstant = \max\{- \Cconstant \, \Bconstant \chi(S) , \, \Dconstant, 1\}$.

Setting $\AAconstant =  \Econstant$ completes the proof of Theorem \ref{Model.manifold.theorem} in the case when $M^1 = N^1$ and that $M^2 = N^2 = \qf(Y,\overline{Y})$.

\subsection{The proof of Theorem \ref{Model.manifold.theorem} in the remaining cases}\label{General.case.section}

Since the gluing takes place locally on the region $\exitexit^2_{[n,n+1]}$, the proof above provides all of the cases where $N^2 = \qf(Y,\overline{Y})$.
The cases when  $N^1 = \qf(Y,\overline{Y})$ are then obtained by replacing $s(t)$ with $1-s(t)$ in the proof.
The general case is then obtained as follows.
Let $\exitexit$ be the $\overline{Y}$--end of $\qf(Y,\overline{Y})$.
First glue $\mathcal{A}^1_{n+1}$ to $\exitexit_{[n,\smallinfinity)} \subset \qf(Y,\overline{Y})$ along $\exitexit_{[n,n+1]}$ as above.
Then glue $\qf(Y,\overline{Y}) - \exitexit_{[n+3,\smallinfinity)}$ to $\mathcal{B}^2_{n+2}$ along $\exitexit_{[n+2,n+3]}$ as above.
The resulting manifolds both contain an isometric copy of $\exitexit_{[n+1,n+2]}$
, and a simple cut and paste completes the proof of the general case.
\end{proof}

\section{Curvatures of surfaces and normal projections}

Letting $w = x_1 + i x_2$, we compactify hyperbolic space by attaching the Riemann sphere $\widehat{\CC}$ via the upper half-space model
\[
	\HH^3 = 
			\{ 
				(x_1,x_2,x_3) \in \RR^3 \ | \ x_3 > 0
			\}.
\]

Let $\mathcal{F}$ be a smooth surface in $\HH^3$ equipped with a smooth unit normal field, and let $q$ be in $\mathcal{F}$.
Applying an element of $\PSL_2(\CC)$, we assume that $q=(0,0,1)$, that the unit normal to $\mathcal{F}$ at $q$ is $-\mathbf{k}$, and that the principal directions at $q$ are $\ii$ and $\jj$. 
Let $\nu$ be the normal projection (or \textit{Gauss map}) of $\mathcal{F}$ to $\widehat{\CC}$ that sends each point of $\mathcal{F}$ to the point of $\widehat{\CC}$ at the end of the geodesic ray given by our normal field. 
Picking an orthonormal basis for $\mathrm{T}_q\mathcal{F}$ along its principal directions and the usual basis for $\mathrm{T}_0 \widehat{\CC}$, the derivative of $\nu$ at $q$ is given by 
\begin{equation}\label{NormalProjectionDerivative}
	\Der \nu_q 
		= \begin{pmatrix} \frac{1 + \kappa_1}{2}  	&   0 \\
  	0 									&  \frac{1 + \kappa_2}{2}
\end{pmatrix}
\end{equation}
where the $\kappa_i$ are the principal curvatures of $\mathcal{F}$ at $q$.
Our convention is that normal curvatures are \textit{positive} when the surface is curving \textit{away} from the normal vector.

\begin{lemma}\label{ConvexSurfaceQuasiconformal.lemma}
Let $S \times [0,\infty)$ be a closed smoothly concave neighborhood of a convex cocompact end $E$ of a hyperbolic manifold and let $Z$ be the conformal boundary at $E$.
If the principal curvatures of $\mathcal{G} = S \times \{0\}$ are within $\epsilon$ of $1$ for some $0 < \epsilon < 1$, then $\mathcal{G}$ and $Z$ are $(1+\epsilon)^2$--quasiconformal.
\end{lemma}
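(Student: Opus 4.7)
The plan is to realize the quasiconformal equivalence by the Gauss map of $\mathcal{G}$. Orient $\mathcal{G}$ by the unit normal field pointing into $E$, so that the principal curvatures are the prescribed $\kappa_i$, and let $\nu \co \mathcal{G} \to \widehat{\CC}$ be the associated Gauss map. Pushing $\mathcal{G}$ outward along the normal geodesic flow foliates the closed collar in $E$ between $\mathcal{G}$ and the conformal boundary, so the image of $\nu$ lies in $Z$.

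First, I would check that $\nu \co \mathcal{G} \to Z$ is a diffeomorphism. Formula (\ref{NormalProjectionDerivative}), combined with the hypothesis $|\kappa_i - 1| < \epsilon < 1$, shows that in orthonormal bases adapted to the principal directions we have
\begin{equation*}
\Der \nu_q \;=\; \mathrm{diag}\!\left(\tfrac{1+\kappa_1}{2},\,\tfrac{1+\kappa_2}{2}\right),
\end{equation*}
whose entries lie in $[1-\epsilon/2,\,1+\epsilon/2]\subset(0,\infty)$; in particular $\nu$ is a local diffeomorphism. For global injectivity I would pass to the universal cover and run the first-failure argument used for Epstein surfaces in Section \ref{Epstein.surfaces.section}. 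The pushed-out surfaces $\mathcal{G}_t$ remain concave with positive principal curvatures, since the equidistant evolution $\kappa_i^t = (\kappa_i\cosh t + \sinh t)/(\cosh t + \kappa_i\sinh t)$ preserves positivity and tends to $1$ as $t\to\infty$. Their Gauss maps $\nu_t$ thus form a continuous family of local diffeomorphisms into $Z$, and become embeddings for $t$ sufficiently large by the implicit function theorem and compactness. Were $\nu = \nu_0$ non-injective, then at the largest time $t_1$ of non-injectivity, local strict convexity would force a self-tangency of $\mathcal{G}_{t_1}$ with matching outward normals, yielding the same contradiction with local injectivity of $\nu_{t_1}$ as in Section \ref{Epstein.surfaces.section}.

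Granted injectivity, the pointwise quasiconformal dilatation of $\nu$ is the ratio of the larger to smaller eigenvalue of $\Der \nu_q$, so
\begin{equation*}
K(\nu) \;\leq\; \frac{1+\epsilon/2}{1-\epsilon/2} \;=\; \frac{2+\epsilon}{2-\epsilon}.
\end{equation*}
The elementary identity $(1+\epsilon)^2(2-\epsilon) - (2+\epsilon) = \epsilon(2 - \epsilon^2) > 0$, valid for $0 < \epsilon < 1$, yields $(2+\epsilon)/(2-\epsilon) \leq (1+\epsilon)^2$, so $\nu$ is $(1+\epsilon)^2$-quasiconformal. The main obstacle is the verification of global injectivity; once the first-failure argument of Section \ref{Epstein.surfaces.section} is adapted, the curvature bound and the dilatation estimate are immediate consequences of formula (\ref{NormalProjectionDerivative}) and a two-line algebraic computation.
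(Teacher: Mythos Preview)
Your approach is essentially the paper's: use the Gauss map, read off its derivative from \eqref{NormalProjectionDerivative}, and bound the dilatation by $(1+\epsilon/2)/(1-\epsilon/2)<(1+\epsilon)^2$. The paper's proof is three lines and omits the injectivity discussion entirely, taking it as implicit in the hypothesis that $S\times[0,\infty)$ is a product neighborhood of the end.

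Your injectivity digression is more than needed, and the invocation of the first--failure argument of Section~\ref{Epstein.surfaces.section} is slightly backwards: there one \emph{knew} the Gauss map was injective (it equaled $\univalent\circ p_t$) and used this to deduce that the immersion $\Xi_t$ was an embedding, not the other way around. In your situation non--injectivity of $\nu_{t_1}$ says two outward normal rays share an endpoint at infinity, which does not by itself force a self--tangency of $\mathcal{G}_{t_1}$. The simpler route is the one you almost take: the equidistant surfaces $\mathcal{G}_t$ share their outward normal rays with $\mathcal{G}$, so the maps $\nu_t$ all agree up to the normal--flow diffeomorphism $\mathcal{G}\to\mathcal{G}_t$; hence injectivity for one $t$ gives it for all. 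Alternatively, $\nu$ is a local diffeomorphism between closed surfaces and is homotopic (through the product structure) to a homeomorphism, so it has degree one and is a diffeomorphism.
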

\begin{proof}
Lift $\mathcal{G}$ to a surface $\widetilde{\mathcal{G}}$ in $\HH^3$ and normalize as above so that the derivative of the normal projection at a point $q$ in $\widetilde{\mathcal{G}}$ is
\[
\Der \nu_q = 	
			\begin{pmatrix} \frac{1 + \kappa_1(q)}{2}  &   0 \\
						  0 &  \frac{1 + \kappa_2(q)}{2}
  			\end{pmatrix}
\]
where the $\kappa_i(q)$ are the principal curvatures of $\widetilde{\mathcal{G}}$ at $q$.
The usual Euclidean metrics on the tangent spaces $\mathrm{T}_q \widetilde{\mathcal{G}}$ and $\mathrm{T}_0 \widehat{\CC}$ are conformally compatible with the Riemannian metrics on $ \widetilde{\mathcal{G}}$ and $\widetilde{Z}$, respectively.
Since the dilatation of the linear map $\Der \nu_q$ is at most $(1 + \epsilon/2)/(1-\epsilon/2) < (1+\epsilon)^2$ at the origin, the dilatation of the quasiconformal map $\nu$ is no more than $(1+\epsilon)^2$ at $q$, see Chapter 1 of \cite{Ahlfors.1966}.
\end{proof}

\begin{lemma}\label{PrincipalCurvaturesCloseToOne.lemma} 
Let $\qf(W,Z)$ be a quasifuchsian manifold, let $\Schwarz \univalent$ be the Schwarzian derivative of the developing map $\univalent \co \Delta \to \UU^Z$,
and let $\EP{}{t}$ be the Epstein surface at time $t$ in the $Z$--end of $\qf(W,Z)$.
If $t \geq \log 9$, then the principal curvatures $\kappa_\pm(z,t)$ of $\EP{}{t}$ at $(z,t)$ satisfy
\begin{equation}\label{PrincipalCurvaturesCloseToOne.equation}
	\left| 
		\kappa_\pm(z,t) - 1
	\right|
\leq 9 e^{-2t}. 
\end{equation}
\end{lemma}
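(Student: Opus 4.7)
The strategy is simply to start from the explicit formula \eqref{PrincipalCurvatures} for the principal curvatures of the Epstein surface, namely
\[
\kappa_\pm(z,t) = \frac{1 - \alpha_\pm(z,t)}{1 + \alpha_\pm(z,t)}, \qquad \alpha_\pm(z,t) := \bigl(1 \pm 2 \|\Schwarz \univalent(z)\|\bigr)e^{-2t},
\]
and to bound $|\kappa_\pm - 1|$ algebraically. A short computation gives
\[
\kappa_\pm(z,t) - 1 = -\frac{2\alpha_\pm(z,t)}{1+\alpha_\pm(z,t)},
\]
so the whole proof reduces to controlling $\alpha_\pm$.

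For the numerator, the Kraus--Nehari theorem, which is invoked repeatedly in Section \ref{Epstein.surfaces.section}, gives $\|\Schwarz \univalent(z)\| \leq 3/2$, hence $|1 \pm 2\|\Schwarz \univalent(z)\|| \leq 4$ and therefore $|\alpha_\pm(z,t)| \leq 4 e^{-2t}$. For the denominator I would use the hypothesis $t \geq \log 9$, which yields $e^{-2t} \leq 1/81$ and so $|\alpha_\pm(z,t)| \leq 4/81$; in particular $|1+\alpha_\pm(z,t)| \geq 1 - 4/81 = 77/81$.

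Combining the two estimates I obtain
\[
|\kappa_\pm(z,t) - 1| \leq \frac{2 \cdot 4 e^{-2t}}{77/81} = \frac{648}{77}\, e^{-2t} < 9 e^{-2t},
\]
which is the desired bound. There is no real obstacle here: the only mild point is ensuring that the constant works out, and $\log 9$ is chosen precisely so that the denominator $1 + \alpha_\pm$ is bounded away from zero by enough to absorb the factor of $8$ from the numerator into the round constant $9$.
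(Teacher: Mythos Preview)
Your proof is correct and is exactly the approach the paper takes: the paper's proof consists of the single sentence ``This follows immediately from \eqref{PrincipalCurvatures} and the Kraus--Nehari Theorem,'' and you have simply written out the elementary algebra behind that sentence. The constants and the role of the hypothesis $t \geq \log 9$ are handled correctly.
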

\begin{proof}
This follows immediately from \eqref{PrincipalCurvatures} and the Kraus--Nehari Theorem.
\end{proof}

\section{Proof of Theorem \ref{DiameterBound.Generalization.theorem}}\label{Proof.section}

\begin{proof}
Let $M^W \cong M$ be a convex cocompact hyperbolic manifold with conformal incompressible boundary $W$, let
$
p \co \qf(W, \overline{Y}) \to M^W
$
be the covering map corresponding to $W$, and assume that $p$ embeds the $d$--neighborhood of the convex core of $\qf(W,\overline{Y})$  isometrically into $M^W$.
Let $n = \floor{\dconstant} - 8$.
We make the crude choice of $8$ to be sure that $S \times [n,n+3]$ avoids the thin part of $M^W$.

First assume that $X$ is such that the $2n$--neighborhood of the convex core of $\qf(X,\overline{Y})$ together with the $\overline{Y}$--end of $\qf(X,\overline{Y})$ isometrically embeds into a convex cocompact hyperbolic $3$--manifold $N^1$ with conformal boundary $\overline{Y}$, see Figure \ref{GlueRXMY.figure}. 
Letting $M^1 = N^1$, $N^2 = \qf(W,\overline{Y})$, and $M^2 = M$,  let $\Manifold$, $\eta$, and $\AAconstant = \AAconstant(\chi(S), \injrad(Y))$ be as in Theorem \ref{Model.manifold.theorem}.
Note that while the topology of $\Manifold$ depends on $X$, our estimates do not.
Since $\Manifold$ is closed, it follows from Tian's theorem that there is a $\Tconstant = \Tconstant(\chi(S), \injrad(Y)) \geq \log 9$ such that, when $n \geq \Tconstant$, the metric $\eta$ on $\Manifold$ is within $\TianC\AAconstant e^{-n} < 1/2$ of a hyperbolic metric $\rho$ on $\Manifold$ in the $\mathcal{C}^2$--norm.\footnote{Note that $\Tconstant$ does depend on $\injrad(Y)$, for we must be at a certain depth in the collar to ensure that the traceless Ricci curvature of our metric vanishes on the thin parts.}
In particular, an $\eta$--unit vector has $\rho$--length within $\TianC\AAconstant e^{-n}$ of $1$. 
The metrics are then $(1 + \constantA e^{-n})$--bilipschitz for $\constantA=\TianC\AAconstant$.
We summarize this discussion in a proposition.
\begin{proposition}\label{Promixity.bilipschitz.proposition}
The metrics $\eta$ and $\rho$ satisfy
\begin{equation}\label{Proximity.equation}
	\| \eta - \rho \|_{\mathcal{C}^2(\Manifold,\eta)} \leq \constantA e^{-n}
\end{equation}
and are therefore $(1 + \constantA e^{-n})$--bilipschitz.
\qed
\end{proposition}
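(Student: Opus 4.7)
The plan is to verify that Tian's theorem applies to the closed Riemannian manifold $(\Manifold, \eta)$ built by Theorem \ref{Model.manifold.theorem} and then pass from the resulting $\mathcal{C}^2$--estimate to the bilipschitz estimate. Parts \ref{Model.manifold.theorem.item.sectional} and \ref{Model.manifold.theorem.item.twonorm} of Theorem \ref{Model.manifold.theorem} already give sectional curvatures within $\AAconstant e^{-2n}$ of $-1$ and an $L^2$--bound of $\AAconstant e^{-n}$ on the traceless Ricci tensor; both lie below $\TianEpsilon$ once $n$ exceeds a threshold $\Tconstant$ depending only on $\chi(S)$ and $\injrad(\overline{Y})$ (via $\AAconstant$). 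The remaining hypothesis of Tian's theorem, that the traceless Ricci tensor vanishes on the $1$--thin part, is automatic outside $\exitexit^2_{[n,n+3]}$ since $\eta$ is hyperbolic there; and inside $\exitexit^2_{[n,n+3]}$ Part \ref{Model.manifold.theorem.item.radius} of Theorem \ref{Model.manifold.theorem} guarantees $\injrad(\eta) \geq (1-\AAconstant e^{-2n})\injrad(\overline{Y})\cosh n > 1$, provided $n$ is large enough in terms of $\injrad(\overline{Y})$. Absorbing this second requirement into $\Tconstant$, all hypotheses are met.

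Tian's theorem then delivers a hyperbolic metric $\rho$ on $\Manifold$ satisfying
\[
    \| \eta - \rho \|_{\mathcal{C}^2(\Manifold,\eta)} \leq \TianC \AAconstant e^{-n}.
\]
Setting $\constantA = \TianC \AAconstant$ yields the first inequality of the proposition.

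For the bilipschitz assertion, I would observe that the $\mathcal{C}^0$--part of \eqref{Proximity.equation} bounds the operator norm of the symmetric bilinear form $\rho-\eta$ with respect to $\eta$ by $\constantA e^{-n}$. Hence for any $\eta$--unit tangent vector $\vv$ we have $|\rho(\vv,\vv) - 1| \leq \constantA e^{-n}$, so the $\rho$--length of $\vv$ lies in the interval $[\sqrt{1 - \constantA e^{-n}},\sqrt{1 + \constantA e^{-n}}\,]$, which is contained in $[(1+\constantA e^{-n})^{-1}, 1+\constantA e^{-n}]$ after a harmless enlargement of $\constantA$ (or $\Tconstant$). This is precisely the $(1 + \constantA e^{-n})$--bilipschitz statement.

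There is no real obstacle: the proposition is essentially a bookkeeping summary of Tian's theorem together with Theorem \ref{Model.manifold.theorem}. The only point requiring minor care is confirming that the traceless Ricci hypothesis on the $1$--thin part is satisfied, which forces $\Tconstant$ to depend on $\injrad(\overline{Y})$ through the injectivity radius estimate in Part \ref{Model.manifold.theorem.item.radius}; this dependence is already acknowledged in the footnote to the preceding paragraph.
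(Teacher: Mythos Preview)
Your proposal is correct and follows essentially the same approach as the paper: the proposition is stated there as a summary (with a \qed and no separate proof) of the paragraph immediately preceding it, which invokes Tian's theorem with the curvature bounds from Theorem~\ref{Model.manifold.theorem}, notes via the footnote that the thin-part hypothesis forces $\Tconstant$ to depend on $\injrad(\overline{Y})$, sets $\constantA = \TianC\AAconstant$, and reads off the bilipschitz statement from the $\mathcal{C}^0$--part of the estimate. Your treatment is slightly more explicit about the square-root passage to the bilipschitz constant, but otherwise identical.
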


\begin{figure}
\bigskip
\bigskip
\bigskip
\bigskip
\bigskip
\medskip
\quad \quad
\quad \quad
\resizebox{.7\textwidth}{!}{\input{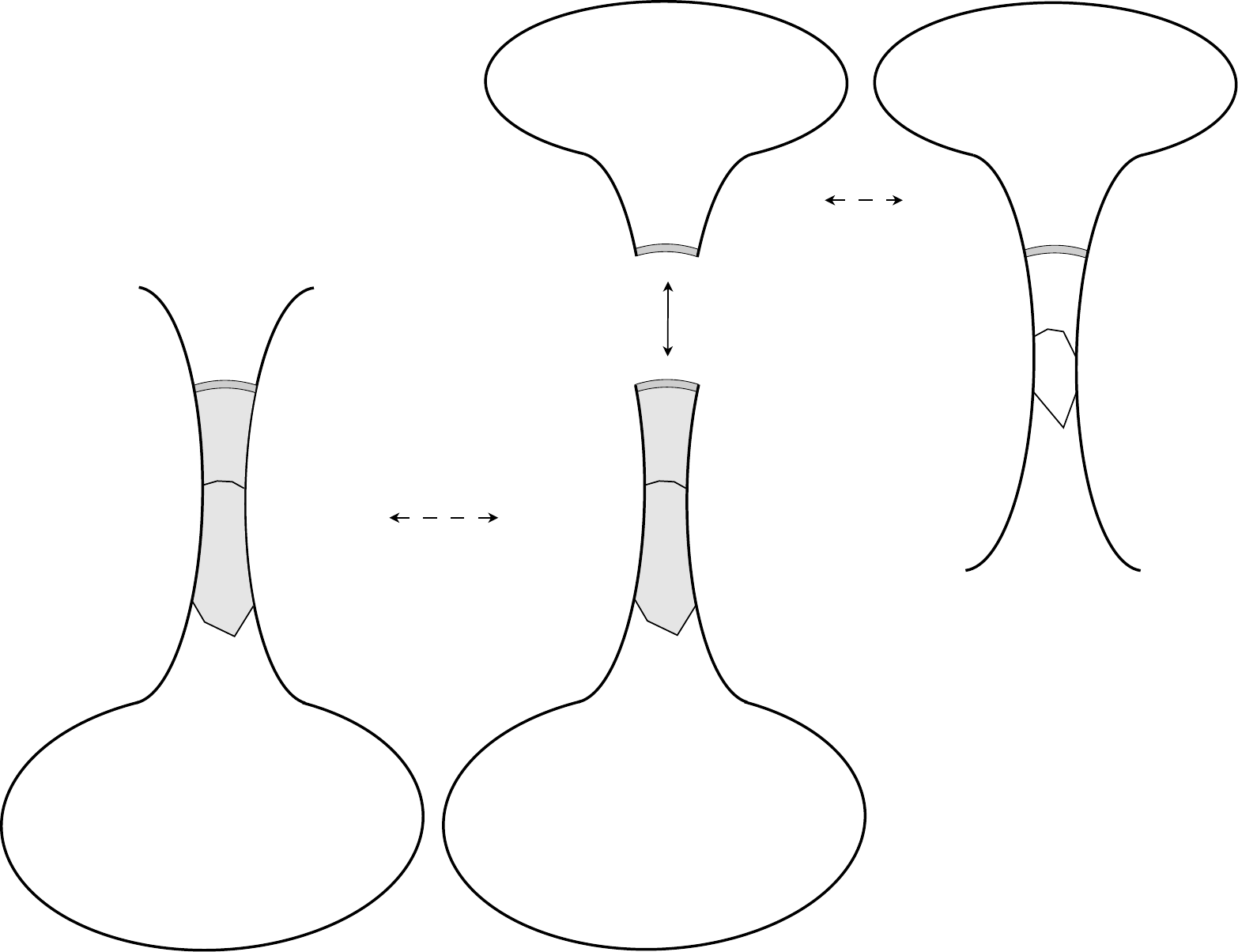tex_t}}
\caption{Building $\Manifold$ with the metric $\eta$.}\label{GlueRXMY.figure}
\end{figure}

Let $\Manifold_\eta = (\Manifold, \eta)$ and $\Manifold_\rho = (\Manifold, \rho)$, and henceforth identify their tangent spaces.

The cover of $\Manifold_\rho$ corresponding to $\partial M$ is a quasifuchsian manifold $\qf(Z,\sigma_M(Z))$, see Figure \ref{Comparison.figure}.

\begin{figure}
\bigskip
\bigskip
\bigskip
\bigskip
\smallskip
\resizebox{.85\textwidth}{!}{\input{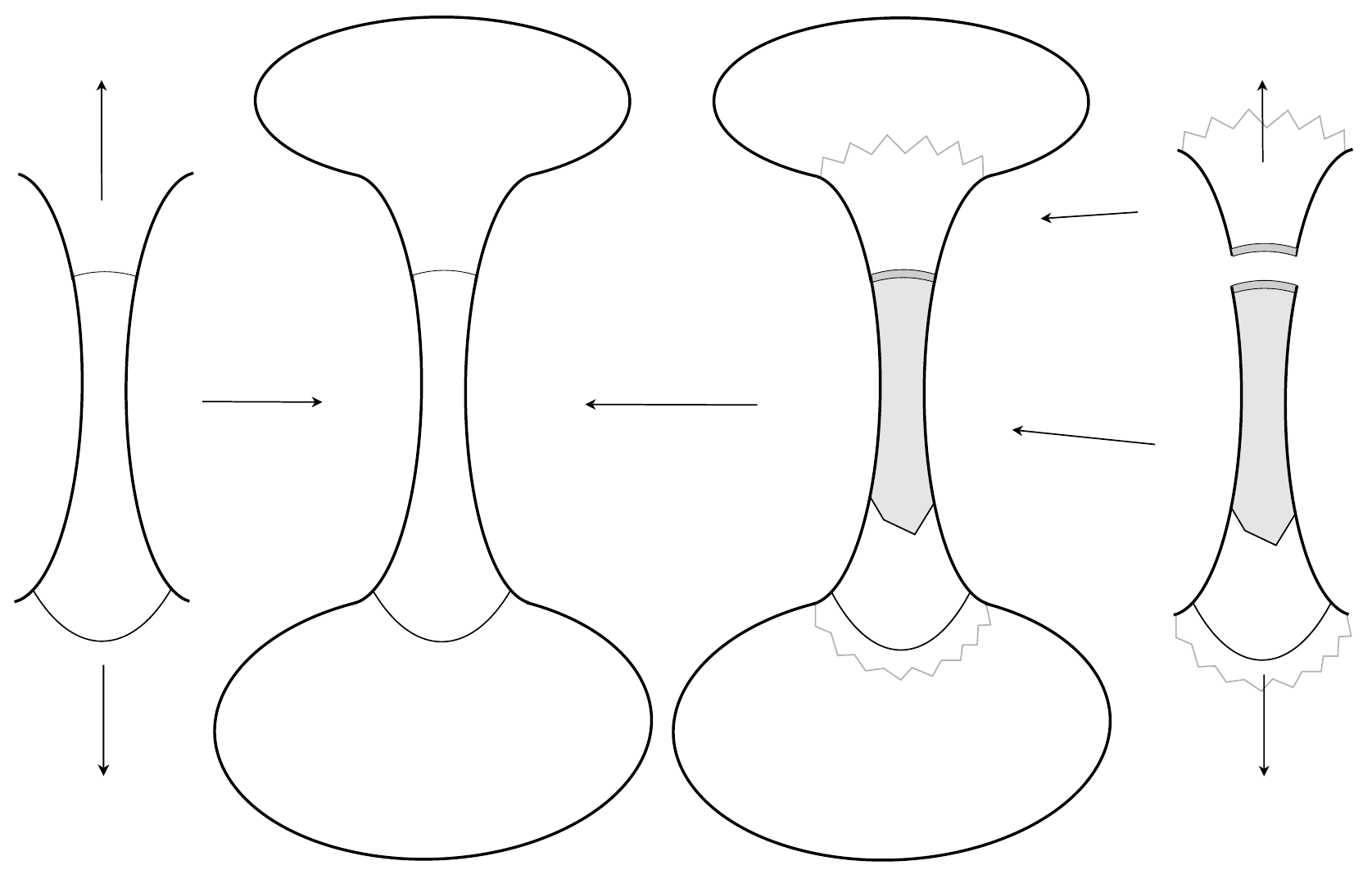tex_t}}
\caption{The manifold $\Manifold$ with its two metrics $\rho$ and $\eta$. 
At left is the covering map $\qf(Z,\sigma_M(Z)) \to \Manifold_\rho$, together with the projections $\mathcal{G}_{-n} \to Z$ and $\mathcal{F}_{n} \to \sigma_M(Z)$.  
At right are partial covering maps from subsets of $\qf(Y,\overline{Y})$ and $\qf(X,\overline{Y})$ to $\Manifold_\eta$ with corresponding projections $\EP{2}{n+3} \to \overline{Y}$ and $\EP{X}{-n} \to X$.
}\label{Comparison.figure}
\end{figure}

We now show that there is a constant $\constantB = \constantB(\chi(S), \injrad(Y)) \geq \constantA$ such that the  Teichm\"uller distance between $\sigma_M(Z)$ and $\overline{Y}$ is less than $2 \constantB e^{-n}$.

Consider the surface $\EP{2}{n+3}$ in $\Manifold_\eta$.
Let $\mathcal{F}_n$ be the image of $\EP{2}{n+3}$ in $\Manifold_\rho$, see Figure \ref{Comparison.figure}.
A small neighborhood of $\mathcal{F}_n$ lifts isometrically to $\qf(Z,\sigma_M(Z))$, and we continue to use $\mathcal{F}_n$ to denote this lift.
Let $\nn$ be the field of $\eta$--unit normal vectors to $\EP{2}{n+3}$ in $\Manifold_\eta$ pointing toward $\overline{Y}$.
Let $\uu$ be an $\eta$--unit vector field tangent to $\EP{2}{n+3}$ on some open patch of $\EP{2}{n+3}$.
The normal curvature of $\EP{2}{n+3}$ along $\uu$ is given by
\begin{equation}\label{Epstein.Normal.Curvature.equation}
	\normalcurv_{\EP{2}{n+3}}\!(\uu)
		= \frac{\mathrm{I\!I}_{\EP{2}{n+3}}\!(\uu,\uu)}{\| \uu \|^2_\eta}
		= - 	\frac{\eta(\nn, \grad_\uu^\eta \uu)}{\| \uu \|^2_\eta}
		= -	\eta(\nn, \grad_\uu^\eta \uu)
\end{equation}
where $\grad^\eta$ is the Levi--Civita connection for $\eta$. 
By Lemma \ref{PrincipalCurvaturesCloseToOne.lemma}, this curvature is within $9e^{-2n}$ of $1$.

Letting $\mm$ be the $\rho$--unit normal field to $\mathcal{F}_n$ pointing toward the skinning surface $\sigma_M(Z)$, the normal curvature of $\mathcal{F}_n$ is given by
\begin{equation}\label{ImageOfEpstein.Normal.Curvature.equation}
	\normalcurv_{\mathcal{F}_n}\!(\uu)
		=\frac{\mathrm{I\!I}_{\mathcal{F}_n}(\uu,\uu)}{\| \uu \|^2_\rho}
		= -	\frac{\rho\big(\mm, \grad_\uu^\rho \uu\big)}{\| \uu \|^2_\rho}
\end{equation}
where $\grad^\rho$ is the Levi--Civita connection for $\rho$.
The proximity \eqref{Proximity.equation} of the metrics provides the following estimate.

\begin{claim}
The normal curvatures satisfy
\begin{equation}
	\Big| \,
		\normalcurv_{\EP{2}{n+3}}\!(\uu) - \normalcurv_{\mathcal{F}_n}\!(\uu) 
	\, \Big|
	= \BigO(e^{-n}).
\end{equation}
\end{claim}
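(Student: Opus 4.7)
The plan is to expand the $\rho$-normal curvature $\normalcurv_{\mathcal{F}_n}(\uu) = -\rho(\mm, \grad^\rho_\uu \uu)/\|\uu\|^2_\rho$ into the corresponding $\eta$-quantities and show that every resulting error term has size $\BigO(e^{-n})$. The engine is the $\mathcal{C}^2$-proximity $\|\eta - \rho\|_{\mathcal{C}^2(\Manifold,\eta)} \leq \constantA e^{-n}$ furnished by Proposition \ref{Promixity.bilipschitz.proposition}.

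First I would fix a point $p \in \mathcal{F}_n$ and choose a smooth $\eta$-orthonormal frame $\{\ee_1, \ee_2, \nn\}$ adapted to the surface near $p$, with $\nn$ the $\eta$-unit normal pointing toward $\overline{Y}$. In this frame, $\rho$ is represented by a symmetric matrix whose coefficients and first partial derivatives lie within $\BigO(e^{-n})$ of the identity and zero, respectively. Writing the $\rho$-unit normal as $\mm = a_1 \ee_1 + a_2 \ee_2 + a_3 \nn$ and imposing the orthogonality relations $\rho(\mm, \ee_i) = 0$ together with the normalization $\rho(\mm,\mm) = 1$ forces $a_1, a_2 = \BigO(e^{-n})$ and $a_3 = 1 + \BigO(e^{-n})$, so that $\mm - \nn$ has $\eta$-norm $\BigO(e^{-n})$. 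The same proximity bound, applied to the Koszul formula expressing the Christoffel symbols via first partial derivatives of metric coefficients, yields $\Gamma^\rho - \Gamma^\eta = \BigO(e^{-n})$ and hence $\grad^\rho_\uu \uu - \grad^\eta_\uu \uu = \BigO(e^{-n})$ in $\eta$-norm. Finally $\|\uu\|_\rho^2 = \eta(\uu,\uu) + (\rho - \eta)(\uu,\uu) = 1 + \BigO(e^{-n})$ since $\|\uu\|_\eta = 1$.

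Substituting these expansions into the definition of $\normalcurv_{\mathcal{F}_n}(\uu)$ and employing the telescoping identity
\[
\rho(\mm, \grad^\rho_\uu\uu) - \eta(\nn, \grad^\eta_\uu\uu)
= \rho(\mm - \nn, \grad^\eta_\uu\uu) + (\rho - \eta)(\nn, \grad^\eta_\uu\uu) + \rho(\mm, \grad^\rho_\uu\uu - \grad^\eta_\uu\uu),
\]
the leading term of $\normalcurv_{\mathcal{F}_n}(\uu)$ is $-\eta(\nn, \grad^\eta_\uu \uu) = \normalcurv_{\EP{2}{n+3}}(\uu)$, while each remaining term carries a factor of $\BigO(e^{-n})$ multiplied by a uniformly bounded quantity; for the latter, Lemma \ref{PrincipalCurvaturesCloseToOne.lemma} ensures that $\|\grad^\eta_\uu\uu\|_\eta$, being essentially a principal curvature of $\EP{2}{n+3}$, is uniformly close to $1$. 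Combining the contributions and absorbing the $1 + \BigO(e^{-n})$ denominator by a geometric expansion yields the advertised bound.

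I anticipate no serious obstacle: the argument is an exercise in bookkeeping anchored on the $\mathcal{C}^2$-closeness estimate. The only point requiring care is the simultaneous change of both the bilinear form and its first slot when passing from $\rho(\mm, \cdot)$ to $\eta(\nn, \cdot)$, which is precisely what the three-term splitting above is designed to handle.
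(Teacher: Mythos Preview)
Your proposal is correct and follows essentially the same approach as the paper: the paper telescopes from $\eta(\nn,\grad^\eta_\uu\uu)$ to $\rho(\nn,\grad^\rho_\uu\uu)$ to $\rho(\mm,\grad^\rho_\uu\uu)$ using the $\mathcal{C}^2$--proximity \eqref{Proximity.equation}, Cauchy--Schwarz, and the bound $\|\nn-\mm\|_\rho = \BigO(e^{-n})$, and then divides by $\|\uu\|_\rho^2 = 1 + \BigO(e^{-n})$ exactly as you do. One small imprecision: $\|\grad^\eta_\uu\uu\|_\eta$ is not itself a principal curvature---only its normal component is $\normalcurv_{\EP{2}{n+3}}(\uu)$, while the tangential part depends on the extension of $\uu$---but the bound $\|\grad^\eta_\uu\uu\|_\eta = \BigO(1)$ holds for any smooth unit extension on the compact surface, and the paper simply asserts this without further justification.
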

\noindent\textit{Proof of claim.}
We have
$
\big\| 
		\grad_\uu^\eta \uu 
	\big\|_\eta 
	= \BigO(1),
$
and so \eqref{Proximity.equation} gives us
\begin{equation}\label{Zeta.Derivative.OmegaZetaNorm.equation}
	\big\| 
		\grad_\uu^\rho \uu 
	\big\|_\eta 
	+
	\big\| 
		\grad_\uu^\rho \uu 
	\big\|_\rho 
	= \BigO(1).
\end{equation}
Together with \eqref{Proximity.equation}, this yields
\begin{equation}
	\Big| \,
		\eta(\nn, \grad_\uu^\eta \uu)
			- \rho\big(\nn, \grad_\uu^\rho \uu\big)
	\, \Big|
= \BigO(e^{-n}).
\end{equation}

Proposition \ref{Promixity.bilipschitz.proposition}  gives us
\begin{equation}\label{mn.difference.equation}
\| \nn - \mm \|_\eta + \| \nn - \mm \|_\rho = \BigO(e^{-n}),
\end{equation}
and then \eqref{Zeta.Derivative.OmegaZetaNorm.equation} and the Cauchy--Schwarz inequality give us
\begin{align}
	\Big| \, 
		\rho\big(\nn, \grad_\uu^\rho \uu\big) 
			- \rho\big(\mm, \grad_\uu^\rho \uu\big)
	\, \Big| 
& 	\leq
		\| \nn - \mm \|_\rho    \big\| \grad_\uu^\rho \uu \big \|_\rho 
			= \BigO(e^{-n}).
\end{align} 

Now, 
\[
\Big|\| \, \uu \|^2_\rho - 1 \, \Big| \leq \constantA e^{-n} \leq \frac{1}{2},
\]
by \eqref{Proximity.equation}, and we conclude that
\begin{equation}
	\Big| \,
		\normalcurv_{\EP{2}{n+3}}\!(\uu) - \normalcurv_{\mathcal{F}_n}\!(\uu) 
	\, \Big|
	=
	\left| \,
	\frac{\eta(\nn, \grad_\uu^\eta \uu)}{\| \uu \|^2_\eta}
		- \frac{\rho\big(\mm, \grad_\uu^\rho \uu\big)}{\| \uu \|^2_\rho}
	\, \right|
	= \BigO(e^{-n})
\end{equation}
by the triangle inequality.
\quad \quad \quad \quad \quad \quad \quad \quad
\quad \quad \quad \quad \quad \quad \quad \quad
\quad \quad \quad \quad \quad \ \ \
\raisebox{2pt}{\framebox[21pt]{\scriptsize Claim}}

\medskip

So there is an $\constantB = \constantB(\chi(S), \injrad(Y)) \geq \constantA$ such that the normal curvatures of $\mathcal{F}_n$ are within $\constantB e^{-n}$ of $1$.
It follows that there is an $n_0 = n_0(\chi(S), \injrad(Y))$ such that, for $n \geq n_0$, the normal projection $\nu_n \co \mathcal{F}_n \to \sigma_M(Z)$ is defined and nonsingular. 
Lemma \ref{ConvexSurfaceQuasiconformal.lemma} tells us that $\nu_n$ is $(1 + \constantB e^{-n})^2$--quasiconformal and that $\EP{2}{n+3}$ is ${(1 + 9 e^{-2n})^2}$--quasiconformally equivalent to $\overline{Y}$.
Since the map $\EP{2}{n+3} \to \mathcal{F}_n$ is  $(1+ \constantA e^{-n})^2$--quasi-conformal, we conclude that the Teichm\"uller distance between $\sigma_M(Z)$ and $\overline{Y}$ is no more than $\log(1+ \constantA e^{-n}) + \log(1 + 9 e^{-2n}) +\log(1+ \constantB e^{-n}) \leq 3\, \constantB e^{-n}$.

We now claim that there is an $\constantC = \constantC(\chi(S), \injrad(Y)) \geq \constantB$ such that $Z$ is within $3\constantC e^{-n}$ of $X$.
To see this, let $\EP{X}{-n}$ be the Epstein surface in the $X$--end of $\qf(X,\overline{Y})$ at distance $n$.
By our assumption on $X$, a small neighborhood of this surface embeds isometrically in $N^1$, and hence $\Manifold_\eta$.
We let $\mathcal{G}_{-n}$ denote this surface considered in $\Manifold_\rho$.
By \eqref{PrincipalCurvaturesCloseToOne.equation}, the principal curvatures of $\EP{X}{-n}$ are within $9e^{-2n}$ of $1$.
So the principal curvatures of $\mathcal{G}_{-n}$ are $1 + \BigO(e^{-n})$, as in the above argument.
By Lemma \ref{ConvexSurfaceQuasiconformal.lemma}, the projection $\nu_{-n} \co \mathcal{G}_{-n} \to Z$ is $(1 + \constantC e^{-n})^2$--quasiconformal for some $\constantC$.
Since proximity of the metrics tells us that $\mathcal{G}_{-n}$ and $\EP{X}{-n}$ are $(1 + \constantA e^{-n})^2$--quasiconformal, and $\EP{X}{-n}$ is ${(1+ e^{-n})^2}$--quasiconformally equivalent to $X$, we conclude that the Teichm\"uller distance between $Z$ and $X$ is less than $3 \, \constantC e^{-n}$.

By Royden's theorem that the Teichm\"uller and Kobayashi metrics agree \cite{Royden},  the skinning map is $1$--lipschitz, as it is holomorphic.\footnote{A more direct argument may be found in Section 16.3 of \cite{Kapovich.2001}.
The argument there establishes the stronger statement that skinning maps are strictly distance--decreasing unless the $3$--manifold is an interval bundle over a surface.
See \cite{skinmcmullen} for another proof of this, as well as a proof that skinning maps of acylindrical manifolds are uniformly contracting.
}
We conclude that the distance between $\sigma_M(X)$ and $\sigma_M(Z)$ is at most $3 \, \constantC e^{-n}$, and so the distance between $\sigma_M(X)$ and $\overline{Y}$ is at most $6\, \constantC e^{-n}$.

Using circle packings with very small circles in the proof of Brooks's theorem \cite{Brooks.1986} (as done in Theorems 31 and 33 of \cite{Kent.2010}) demonstrates that any Riemann surface $V$ is within $\constantC e^{-n}$ of an $X$ such that the $2n$--neighborhood of the convex core of $Q(X,\overline{Y})$ together with the $\overline{Y}$--end of $Q(X,\overline{Y})$ embeds into a convex cocompact hyperbolic $3$--manifold $N^1$ with conformal boundary $\overline{Y}$.
Since skinning maps are $1$--lipschitz, we conclude that the diameter of $\sigma_M$ is no more than $7\, \constantC e^{-n} = 7\, \constantC e^{-(\floor{\dconstant} -8)} 
\leq 56722\, \constantC e^{-\dconstant} 
$.

Since the metrics $\eta$ and $\rho$ are within $\TianC\AAconstant e^{-n}$ of each other, the images $\mathcal{F}_t$ of the $\EP{2}{t}$ in $\Manifold_\rho$ are convex for $t$ greater than our chosen $T$.
It follows that there is a $K_0 = K_0(\chi(S),\injrad(Y))$ such that the $K_0 d$--neighborhood of the convex core of $\qf(Z, \sigma_M(Z))$ embeds in $M^Z$.
Since $X$ is within $3 \, \constantC e^{-n}$ of $Z$, Corollary B.23 and Proposition 2.16 of \cite{renorm}\footnote{Corollary B.23 says that $L$--quasiconformal conjugacies of Kleinian groups provide $L^{3/2}$--bilipschitz maps between hyperbolic manifolds, and Proposition 2.16 bounds the distance between convex cores of bilipschitz manifolds.} provide a $K_1 = K_1(\chi(S),\injrad(Y))$ such that the $K_1 d$--neighbor-hood of the convex core of $\qf(X,\sigma_M(X))$ embeds in $M^X$.
As any Riemann surface is within $\constantC e^{-n}$ of such an $X$, another application of Corollary B.23 and Proposition 2.16 of \cite{renorm} provide a $K = K(\chi(S),\injrad(Y))$ such that the $K d$--neighborhood of the convex core of $\qf(V,\sigma_M(V))$ embeds in $M^V$ for any $V$.

This completes the proof of Theorem \ref{DiameterBound.Generalization.theorem}.
\end{proof}


\bibliographystyle{plain}
\bibliography{ThickSkinned.Biblio}

\begin{thebibliography}{10}

\bibitem{Ahlfors.1966}
Lars~V. Ahlfors.
\newblock {\em Lectures on quasiconformal mappings}.
\newblock Manuscript prepared with the assistance of Clifford J. Earle, Jr. Van
  Nostrand Mathematical Studies, No. 10. D. Van Nostrand Co., Inc., Toronto,
  Ont.-New York-London, 1966.

\bibitem{Anderson.1998}
C.~Greg Anderson.
\newblock Projective structures on {R}iemann surfaces and developing maps to
  {$\mathbb H^3$} and {$\mathbb C P^n$}.
\newblock University of California at Berkeley PhD thesis, 1998.

\bibitem{Bers.1960}
Lipman Bers.
\newblock Simultaneous uniformization.
\newblock {\em Bull. Amer. Math. Soc.}, 66:94--97, 1960.

\bibitem{Bers.1970.Boundaries}
Lipman Bers.
\newblock On boundaries of {T}eichm\"uller spaces and on {K}leinian groups.
  {I}.
\newblock {\em Ann. of Math. (2)}, 91:570--600, 1970.

\bibitem{Bers.1970}
Lipman Bers.
\newblock Spaces of {K}leinian groups.
\newblock In {\em Several Complex Variables, I (Proc. Conf., Univ. of Maryland,
  College Park, Md., 1970)}, pages 9--34. Springer, Berlin, 1970.

\bibitem{Bromberg.2004.JAMS}
Kenneth~W. Bromberg.
\newblock Hyperbolic cone-manifolds, short geodesics, and {S}chwarzian
  derivatives.
\newblock {\em J. Amer. Math. Soc.}, 17(4):783--826 (electronic), 2004.

\bibitem{Brooks.1986}
Robert Brooks.
\newblock Circle packings and co-compact extensions of {K}leinian groups.
\newblock {\em Invent. Math.}, 86(3):461--469, 1986.

\bibitem{Canary.McCullough.2004}
Richard~D. Canary and Darryl McCullough.
\newblock Homotopy equivalences of 3-manifolds and deformation theory of
  {K}leinian groups.
\newblock {\em Mem. Amer. Math. Soc.}, 172(812):xii+218, 2004.

\bibitem{Epstein.1984}
Charles~L. Epstein.
\newblock Envelopes of horospheres and {W}eingarten surfaces in hyperbolic
  $3$--space.
\newblock Preprint, 1984.

\bibitem{Gallot.Hulin.Lafontaine.2004}
Sylvestre Gallot, Dominique Hulin, and Jacques Lafontaine.
\newblock {\em Riemannian geometry}.
\newblock Universitext. Springer-Verlag, Berlin, third edition, 2004.

\bibitem{Kapovich.2001}
Michael Kapovich.
\newblock {\em Hyperbolic manifolds and discrete groups}, volume 183 of {\em
  Progress in Mathematics}.
\newblock Birkh\"auser Boston Inc., Boston, MA, 2001.

\bibitem{Kent.2010}
Richard~Peabody Kent, IV.
\newblock Skinning maps.
\newblock {\em Duke Math. J.}, 151(2):279--336, 2010.

\bibitem{Kraus.1932}
W.~Kraus.
\newblock {\"Uber den Zusammenhang einiger Characteristiken eines einfach
  zusammenh\"angenden Bereiches mit der Kreisabbildung}.
\newblock {\em Mitt. Math. Sem. Giessen}, 21:1--28, 1932.

\bibitem{marden}
Albert Marden.
\newblock The geometry of finitely generated kleinian groups.
\newblock {\em Ann. of Math. (2)}, 99:383--462, 1974.

\bibitem{skinmcmullen}
Curtis~T. McMullen.
\newblock Iteration on {T}eichm\"uller space.
\newblock {\em Invent. Math.}, 99(2):425--454, 1990.

\bibitem{renorm}
Curtis~T. McMullen.
\newblock {\em Renormalization and 3-manifolds which fiber over the circle},
  volume 142 of {\em Annals of Mathematics Studies}.
\newblock Princeton University Press, Princeton, NJ, 1996.

\bibitem{Morgan.1979}
John~W. Morgan.
\newblock On {T}hurston's uniformization theorem for three-dimensional
  manifolds.
\newblock In {\em The Smith conjecture (New York, 1979)}, volume 112 of {\em
  Pure Appl. Math.}, pages 37--125. Academic Press, Orlando, FL, 1984.

\bibitem{mumford}
David Mumford.
\newblock A remark on {M}ahler's compactness theorem.
\newblock {\em Proc. Amer. Math. Soc.}, 28:289--294, 1971.

\bibitem{Nehari.1949}
Zeev Nehari.
\newblock The {S}chwarzian derivative and schlicht functions.
\newblock {\em Bull. Amer. Math. Soc.}, 55:545--551, 1949.

\bibitem{Otal.1998}
Jean-Pierre Otal.
\newblock Thurston's hyperbolization of {H}aken manifolds.
\newblock In {\em Surveys in differential geometry, Vol. III (Cambridge, MA,
  1996)}, pages 77--194. Int. Press, Boston, MA, 1998.

\bibitem{Otal.2001}
Jean-Pierre Otal.
\newblock {\em The hyperbolization theorem for fibered 3-manifolds}, volume~7
  of {\em SMF/AMS Texts and Monographs}.
\newblock American Mathematical Society, Providence, RI, 2001.
\newblock Translated from the 1996 French original by Leslie D. Kay.

\bibitem{Royden}
H.~L. Royden.
\newblock Automorphisms and isometries of {T}eichm\"uller space.
\newblock In {\em Advances in the {T}heory of {R}iemann {S}urfaces ({P}roc.
  {C}onf., {S}tony {B}rook, {N}.{Y}., 1969)}, pages 369--383. Ann. of Math.
  Studies, No. 66. Princeton Univ. Press, Princeton, N.J., 1971.

\bibitem{sullivanstability}
Dennis Sullivan.
\newblock Quasiconformal homeomorphisms and dynamics. {II}. {S}tructural
  stability implies hyperbolicity for {K}leinian groups.
\newblock {\em Acta Math.}, 155(3-4):243--260, 1985.

\bibitem{Thurston.1979.Bangor}
William~P. Thurston.
\newblock Hyperbolic geometry and {$3$}-manifolds.
\newblock In {\em Low-dimensional topology (Bangor, 1979)}, volume~48 of {\em
  London Math. Soc. Lecture Note Ser.}, pages 9--25. Cambridge Univ. Press,
  Cambridge, 1982.

\bibitem{Tian.1990}
Gang Tian.
\newblock {A pinching theorem on manifolds with negative curvature}.
\newblock In {\em Proceedings of International Conference on Algebraic and
  Analytic Geometry}. Tokyo, 1990.

\end{thebibliography}

\medskip

{
\footnotesize
\noindent 
\textsc{Department of Mathematics, University of Wisconsin -- Madison, Madison, WI 53706} 
\\ \noindent  \texttt{rkent@math.wisc.edu}   

\medskip
\noindent 
\textsc{Department of Mathematics, Yale University, New Haven, CT 06520} 
\\ \noindent  \texttt{yair.minsky@yale.edu}   

}

\end{document}